\newtheorem{theorem}{Theorem}[section]
\newtheorem*{theorem*}{Theorem}
\newtheorem{lemma}[theorem]{Lemma}
\newtheorem{corollary}[theorem]{Corollary}
\newtheorem{proposition}[theorem]{Proposition}
\newtheorem{definition}[theorem]{Definition}
\newtheorem{example}[theorem]{Example}
\newenvironment{customthm}[1]
  {\innercustomthm}
  {\endinnercustomthm}
\newcommand{\R}{\mathbb{R}}
\newcommand{\C}{\mathbb{C}}
\begin{document}

\title[On the Invariance of the Real Milnor Number]
{On the Invariance of the Real Milnor Number under Asymptotically Lipschitz Equivalence}

 \author[Raphael de Omena]{Raphael de Omena}
\address{Raphael de Omena: Departamento de Matemática, Universidade Federal do Ceará, 
Campus do Pici, Bloco 914, Pici, 60440-900, Fortaleza--CE, Brasil. 
\newline E-mail: \texttt{raphael.marinho@mat.ufc.br}}

\author[José Edson Sampaio]{José Edson Sampaio}
\address{José Edson Sampaio: Departamento de Matemática, Universidade Federal do Ceará, 
Campus do Pici, Bloco 914, Pici, 60440-900, Fortaleza--CE, Brasil. 
\newline E-mail: \texttt{edsonsampaio@mat.ufc.br}}

\author[Emanoel Souza]{Emanoel Souza}
\address{Emanoel Souza: Departamento de Matemática, Universidade Estadual do Ceará (UECE), 
Campus Itaperi, Av. Dr. Silas Munguba, 60714-903, Fortaleza--CE, Brasil. 
\newline E-mail: \texttt{emanoelfs.cdd@gmail.com}}

%

\begin{abstract}  
We investigate sufficient conditions for the invariance of the real Milnor number under \( \mathcal{R} \)-bi-Lipschitz equivalence for function-germs \( f, g \colon (\mathbb{R}^n, 0) \to (\mathbb{R}, 0) \). More generally, we explore its invariance within the extended framework of \( \mathcal{R} \)-asymptotically Lipschitz equivalence. To this end, we introduce the \(\alpha\)-derivative, which provides a natural setting for studying asymptotic growth. Additionally, we discuss the implications of our results in the context of \( C^k \) and \( C^{\infty} \) equivalences, establishing sufficient conditions for the real Milnor number to remain invariant.  
\end{abstract}  
\keywords{real Milnor number, Bi-Lipschitz invariants, \( \mathcal{R} \) equivalences}

\maketitle

\section{Introduction}

An important approach in singularity theory involves studying the properties of map germs under various equivalence relations. These equivalence relations are defined by Mather's groups, for instance \(\mathcal{G} \in \{\mathcal{A}, \mathcal{R}, \mathcal{L}, \mathcal{K}, \mathcal{C}\}\), which act on the space of map germs.

Since listing all $\mathcal{G}$-orbits is impractical, certain restrictions must be imposed. In general, we focus on $\mathcal{G}$-finite germs of lower $\mathcal{G}$- or $\mathcal{G}_e$-codimension. These codimensions are $\mathcal{G}$-invariants of the function-germ $f$, meaning they are values that depend exclusively on the $\mathcal{G}$-orbit of $f$.

Let \(f \colon (\mathbb{R}^n,0) \to \mathbb{R} \) be a germ of $C^{\infty}$ function. The \textit{codimension} and the \textit{extended codimension} of the $\mathcal{G}$-orbit of \( f \) is defined, respectively, by
\[
\text{cod}(f, \mathcal{G}) = \dim_{\mathbb{R}}\dfrac{\mathfrak{m}_n\mathcal{E}_n}{L\mathcal{G} \cdot f} \quad \text{and} \quad \text{cod}_e(f, \mathcal{G}) = \dim_{\mathbb{R}}\dfrac{\mathcal{E}_n}{L_e\mathcal{G} \cdot f},
\]
where \( L\mathcal{G} \cdot f \) and \( L_e\mathcal{G} \cdot f \) denote the tangent space and the extended tangent space of the orbit \( \mathcal{G} \cdot f \). 

Among the various invariants associated with function-germs, the extended codimension in the 
\(\mathcal{R}\) equivalence plays a fundamental role. Notably, the \textit{real Milnor number} is defined as \(\mu(f) = \text{cod}_e(f, \mathcal{R})\). In Section \ref{section:preliminaries}, we present the necessary background on Lipschitz equivalence and the real Milnor number. Here, we briefly highlight the main problem addressed in this work.

In the context of holomorphic function-germs \( f \colon (\mathbb{C}^n, 0) \to (\mathbb{C}, 0) \), Milnor \cite{Milnor:1968} showed that if \( f \) has an isolated singularity at the origin, then the intersection of the singular fiber with a sufficiently small sphere is homotopy equivalent to a wedge of \( \mu(f) = \deg\left(\frac{\nabla f}{|\nabla f|}\right) \) spheres.

Lê \cite{Le:1973} proved that the Milnor number is a topological invariant for germs of holomorphic functions $ f, g \colon (\mathbb{C}^n, 0) \to (\mathbb{C}, 0) $ with isolated singularities at the origin. Specifically, if there exists an ambient homeomorphism \linebreak $ \varphi \colon (\mathbb{C}^n, 0) \to (\mathbb{C}^n, 0) $ such that $ \varphi(V(f)) = V(g) $, then $ \mu(f) = \mu(g) $.

However, the same does not hold for real $C^{\infty}$ functions, as shown by Wall \cite{Wall:1983} in the following example. In the same work, Wall also proved that the real Milnor number modulo 2 is a topological invariant.

\begin{example}[\cite{Wall:1983}] 
Let $f_{ij} \colon \mathbb{R}^n \to \mathbb{R}$ be the polynomial given by
$$
f_{ij}(x_1, \dots, x_n) = x_1^{2i} + x_2^2 + \cdots + x_j^2 - x_{j+1}^2 - \cdots - x_n^2.
$$
Then, the topological type of $V(f_{ij})$ is determined by $j$ and $n$, while $\mu(f_{ij}) = 2i - 1$.
\end{example}

As demonstrated by the following example, the real Milnor number is not a $ C^1 $ invariant. In particular, the real Milnor number is not a bi-Lipschitz invariant.

\begin{example}\label{Ex:NotC1}
Let $ f, g \colon \mathbb{R}^2 \to \mathbb{R} $ be polynomials given by 
 \[f(x,y) = x^4 - y^3, \quad g(x,y) = y. \] 
 The map $ \phi \colon \mathbb{R}^2 \to \mathbb{R}^2 $ defined by $ \phi(x,y) = \left(x, x^{\frac{4}{3}} - y \right) $ is a $ C^1 $ diffeomorphism such that $ \phi(V(f)) = V(g) $. Nevertheless, $ \mu(f) = 6 $ and $ \mu(g) = 0 $.
\end{example}

In Section~\ref{section:invarianceMilnor}, we analyze conditions under which the real Milnor number remains invariant and establish that:

\begin{customthm}{\ref*{Cor:Invariance}}
   Let \(f, g\colon (\R^n,0)\to (\R,0)\) be germs of \(C^{\infty}\) functions. Suppose that \({\rm in}(f)\) and \({\rm in}(g)\) have algebraically isolated singularities at the origin. If \(f\) and \(g\) are \(\mathcal{R}\)-asymptotically Lipschitz equivalent  at the origin, then \(\mu(f) = \mu(g)\).
\end{customthm}

Examples~\ref{Ex:AlphaHolder} and~\ref{exam:Lip_trivial_milnor_num_different} highlight the sharpness of Theorem \ref{Cor:Invariance} in the following sense. Example~\ref{Ex:AlphaHolder} shows that the regularity assumption on the equivalence is optimal: the real Milnor number is not preserved under a bi-$\alpha$-Hölder homeomorphism when $0 < \alpha < 1$ even under the assumptions that \({\rm in}(f)\) and \({\rm in}(g)\) have algebraically isolated singularities at the origin. On the other hand, Example~\ref{exam:Lip_trivial_milnor_num_different} reveals that it is crucial to assume that the initial parts of the functions have algebraically isolated singularities.

To establish this result, we introduce the 
\(\alpha\)-derivative in Section \ref{section:asymptotic}, which naturally extends the notion of the initial part and behaves well under asymptotic Lipschitz equivalence. Moreover, we generalize results for mappings and functions within a framework based on asymptotic growth. To be specific, we demonstrate the invariance of the order of a map under this equivalence (Theorem \ref{inv_order}).

Finally, in Section \ref{section:diffeo}, we extend our discussion to the setting of \(C^k\) equivalence, showing that, \textit{a priori}, it is not necessary for both initial parts to have an algebraically isolated singularity at the origin (Theorem \ref{Teo:CkEqui}). Furthermore, in the case of \(C^{\infty}\) equivalence, we establish the following result:
\begin{customthm}{\ref*{Teo:Cinfinito}}
Let \( f, g \colon (\mathbb{R}^n,0) \to (\mathbb{R},0) \) be irreducible germs of real analytic functions. If there exists a \( C^{\infty} \) diffeomorphism  
\(\phi \colon (V(f), 0) \to (V(g), 0)\), then  \( \mu(f) = \mu(g) \).
\end{customthm}

\section{Preliminaries}\label{section:preliminaries}

Here, we explore the basic definitions relevant to this paper, specifically those related to \textit{lipeomorphisms} \( \phi \colon \mathbb{R}^n \to \mathbb{R}^n \), \textit{i.e}, a bijective map for which there exists a constant \( c > 0 \) such that
\[
\frac{1}{c} \|x - y\| \leq \|\phi(x) - \phi(y)\| \leq c \|x - y\| \quad \text{for all } x, y \in \mathbb{R}^n.
\]

An important object related to a lipeomorphism is constructed in \cite{Sampaio:2016} as follows. Let \( \varphi: U \rightarrow V \) be a lipeomorphism, where \( U, V \subset \mathbb{R}^n \) are open subsets. Assume that \( \phi, \psi: \mathbb{R}^n \rightarrow \mathbb{R}^n \) are Lipschitz extensions of \( \varphi \) and \( \varphi^{-1} \), respectively. For a fixed point \( x \in U \), define the sequence of functions \( \phi_{x,j}: \mathbb{R}^n \rightarrow \mathbb{R}^n \) by
\[
\phi_{x,j}(v) = j\left(\phi\left(x + \frac{1}{j}v\right) - \phi(x)\right).
\]

Since \( \phi \) is Lipschitz, there exists a map \( d_x\phi: \mathbb{R}^n \rightarrow \mathbb{R}^n \) and a subsequence \( \{j_i\} \) such that $\phi_{x,j_i}$ converges, uniformly on compact subsets containing $x$, to $d_x\phi$, by Arzel\`a-Ascoli Theorem. Setting \( y = \phi(x) \), we can obtain a map \( d_y\psi \) by applying the same construction to \( \psi \) at \( y \). It is worth noting that \( d_x\phi \) and \( d_y\psi \) are lipeomorphisms with \( (d_x\phi)^{-1} = d_y\psi \) on the open subsets.

\begin{definition}
    A map \( d_x\phi: \mathbb{R}^n \rightarrow \mathbb{R}^n \), constructed as in the preceding discussion, is called a \textbf{pseudo-Lipschitz derivative} of \( \phi \) at \( x \in U \).
\end{definition}

\(\mathcal{G}\)-bi-Lipschitz equivalence has been studied by several authors, including \cite{BirbrairCFR:2007,HenryP:2004, NguyenRT:2020}, who investigated classification and finiteness results. To proceed, we first recall the definition that is used throughout this article.

\begin{definition}
Two germs of mappings \( F, G : (\mathbb{R}^n, 0) \to (\mathbb{R}^p, 0) \) are said to be \(\mathcal{R}\)-\textbf{bi-Lipschitz equivalent} if there exist a germ of lipeomorphism 
\( \phi : (\mathbb{R}^n, 0) \to (\mathbb{R}^n, 0) \) such that \(F = G \circ \phi\).
\end{definition}


\begin{definition} 
Two mappings \( F, G \colon \mathbb{R}^n \to \mathbb{R}^p \) are said to be \textbf{asymptotically equivalent} at a point \(x_0 \in \mathbb{R}^n\) if there exist a neighborhood \(U_{x_0}\) of \(x_0\) in \(\mathbb{R}^n\) and positive constants \(c_1, c_2\) such that  
\[
c_2 \|F(x)\| \leq \|G(x)\| \leq c_1 \|F(x)\|, \quad \text{for all } x \in U_{x_0}.
\]  
In this case, we write \(F \sim_{x_0} G\). If this condition holds for all \(x\) in the domain of \(F\) and \(G\), we simply denote \(F \sim G\).  

Furthermore, \(F\) and \(G\) are said to be \textbf{asymptotically Lipschitz equivalent} at \(x_0\) if there exist a lipeomorphism \(\phi \colon U \to V\), where \(U \subset \mathbb{R}^n\) and \(V \subset \mathbb{R}^p\) are open subsets, and a constant \(c \geq 1\) such that  
\[
\frac{1}{c} \|F(x) - F(x_0)\| \leq \|G(\phi(x)) - G(\phi(x_0))\| \leq c \|F(x) - F(x_0)\|, 
\]  
for all \(x \in U_{x_0}\). In this case, we denote \(F \approx_{x_0} G\). If this property holds for all \(x\) in \(U\), we simply write \(F \approx G\).  

Finally, \(F\) and \(G\) are called \(\mathcal{R}\)-\textbf{asymptotically Lipschitz equivalent} at \(x_0\) if there exists a lipeomorphism \(\phi\) such that \(F \sim_{x_0} G \circ \phi\).  
\end{definition}

The authors in \cite{BirbrairCFR:2007} proved that if \(f\) and \(g\) are germs of Lipschitz functions, then they are asymptotically equivalent if and only if they are \(\mathcal{C}\)-bi-Lipschitz equivalent. Furthermore, it has been demonstrated in \cite{Nguyen:2022} that for \(C^\infty\) mapping germs, \(\mathcal{K}\)-bi-Lipschitz equivalence implies \(\mathcal{R}\)-asymptotically Lipschitz equivalence.


We now consider $\mathcal{E}_n$, the set of real $C^{\infty}$ function-germs $(\mathbb{R}^n, 0) \to \mathbb{R}$. It is well known that $\mathcal{E}_n$ is naturally endowed with an $\mathbb{R}$-algebra structure, and the set $\mathfrak{m}_n = \{ f \in \mathcal{E}_n \mid f(0) = 0 \}$ constitutes the unique maximal ideal in $\mathcal{E}_n$.

In the context of the previously defined $\mathcal{R}$-bi-Lipschitz equivalence, if the lipeomorphism is a \(C^k\) diffeomorphism, for \(k \in \mathbb{N} \cup \{\infty\}\) we refer to these as $\mathcal{R}$-\(C^k\) \textit{equivalence}.

One should observe that two function-germs \(f,g \in \mathcal{E}_n \) are $\mathcal{R}$-\(C^{\infty}\) equivalent if they lie in the same orbit under the action of the group $\mathcal{R}$ on $\mathcal{E}_n$. In this regard, many authors have investigated the notions of stability, deformation, and unfolding. For precise definitions, you may refer to Chapter 3 of \cite{IzumiyaFRT:2016}. 

\begin{definition}
    Let \( f \in \mathcal{E}_n \). We define the real \textbf{Milnor number} of \( f \) as the length of the algebra
    \[
    \mu(f) = \text{cod}_e(f, \mathcal{R}) := \dim_{\mathbb{R}} \frac{\mathcal{E}_n}{J(f)},
    \]
    where \( J(f) \) is the ideal generated by \( \left\{ \frac{\partial f}{\partial x_1}, \dots, \frac{\partial f}{\partial x_n} \right\} \).

Additionally, we say that $ f $ has an \textbf{algebraically isolated singularity} at the origin if $ \mu(f) < +\infty $.

\end{definition}

The real case is connected with the complex case, as in the following result, which can be found as Proposition 7.1.2 in \cite{NunoBallesterosOSR:2021}.

\begin{proposition}\label{Prop:Complexifing} 
Let \( f \colon (\mathbb{R}^n, 0) \to (\mathbb{R}, 0) \) be an analytic function-germ, and let \( f_{\mathbb{C}} \) be its complexification. Then, \( \mu(f) < \infty \) if and only if \( \mu(f_{\mathbb{C}}) < \infty \). In addition, if \( \mu(f) < \infty \) then \( \mu(f) = \mu(f_{\mathbb{C}}) \).   
\end{proposition}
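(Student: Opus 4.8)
\emph{Proof proposal.} The strategy is to compute both sides through rings of formal power series, where extension of scalars from $\mathbb{R}$ to $\mathbb{C}$ is transparent. Write $J(f)\subseteq\mathcal{E}_n$ for the Jacobian ideal, and let $\widehat{J(f)}\subseteq\mathbb{R}[[x_1,\dots,x_n]]$ be the ideal generated by the Taylor expansions of $\partial f/\partial x_1,\dots,\partial f/\partial x_n$; since $f$ is analytic these generators are analytic, so $\widehat{J(f)}=J(f)\,\mathbb{R}[[x]]$. Likewise let $J(f_{\mathbb{C}})\subseteq\mathcal{O}_n:=\mathbb{C}\{z_1,\dots,z_n\}$ and $\widehat{J(f_{\mathbb{C}})}\subseteq\mathbb{C}[[z]]$ be the corresponding Jacobian ideals of the complexification. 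The plan is to establish the chain
\[
\mu(f)=\dim_{\mathbb{R}}\frac{\mathbb{R}[[x]]}{\widehat{J(f)}}=\dim_{\mathbb{C}}\frac{\mathbb{C}[[z]]}{\widehat{J(f_{\mathbb{C}})}}=\mu(f_{\mathbb{C}}),
\]
with the understanding that either all four quantities are finite or all are infinite; this gives both the equivalence of finiteness and the equality.

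First I would prove $\dim_{\mathbb{R}}\mathcal{E}_n/J(f)=\dim_{\mathbb{R}}\mathbb{R}[[x]]/\widehat{J(f)}$. If the left-hand side is finite, the descending chain $\mathfrak{m}_n^{\,j}(\mathcal{E}_n/J(f))$ stabilizes, so Nakayama's lemma applied to the finitely generated $\mathcal{E}_n$-module $\mathcal{E}_n/J(f)$ gives $\mathfrak{m}_n^k\subseteq J(f)$ for some $k$. Then $\mathcal{E}_n/J(f)$ is a module over $\mathcal{E}_n/\mathfrak{m}_n^k$, and Borel's theorem — the Taylor map $\tau\colon\mathcal{E}_n\to\mathbb{R}[[x]]$ is surjective with kernel $\mathfrak{m}_n^{\infty}\subseteq\mathfrak{m}_n^k$ — identifies $\mathcal{E}_n/J(f)$ with $\mathbb{R}[[x]]/\tau(J(f))=\mathbb{R}[[x]]/\widehat{J(f)}$. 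Conversely, if $\mathbb{R}[[x]]/\widehat{J(f)}$ is finite-dimensional, then $\mathfrak{m}^k\subseteq\widehat{J(f)}=J(f)\,\mathbb{R}[[x]]$ for some $k$ (with $\mathfrak{m}$ the maximal ideal of $\mathbb{R}[[x]]$); contracting to the Noetherian local ring $\mathbb{R}\{x\}$, over which $\mathbb{R}[[x]]$ is faithfully flat (it is the completion), yields $\mathfrak{m}_{\mathbb{R}\{x\}}^k\subseteq J(f)\,\mathbb{R}\{x\}$, and extending scalars along $\mathbb{R}\{x\}\hookrightarrow\mathcal{E}_n$ gives $\mathfrak{m}_n^k\subseteq J(f)$, so the left-hand side is finite and we are back in the previous case.

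Next comes the extension of scalars. Since $\mathbb{C}[[z]]=\mathbb{C}\otimes_{\mathbb{R}}\mathbb{R}[[x]]$ and the partials of $f_{\mathbb{C}}$ are the complexifications of those of $f$, we have $\widehat{J(f_{\mathbb{C}})}=\mathbb{C}\otimes_{\mathbb{R}}\widehat{J(f)}$, hence $\mathbb{C}[[z]]/\widehat{J(f_{\mathbb{C}})}\cong\mathbb{C}\otimes_{\mathbb{R}}\bigl(\mathbb{R}[[x]]/\widehat{J(f)}\bigr)$ by flatness of $\mathbb{C}$ over $\mathbb{R}$; this gives the middle equality of the chain, with $\dim_{\mathbb{C}}$ of the complexification equal to $\dim_{\mathbb{R}}$ of the original. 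Finally, on the complex side the identity $\mu(f_{\mathbb{C}})=\dim_{\mathbb{C}}\mathbb{C}[[z]]/\widehat{J(f_{\mathbb{C}})}$ is again faithful flatness of the completion $\mathcal{O}_n\hookrightarrow\mathbb{C}[[z]]$ of a Noetherian local ring (equivalently: an isolated singularity is finitely determined, so its Milnor algebra is unchanged by completion). Concatenating the three steps proves the proposition.

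I expect the only genuinely delicate point to be the comparison between the $C^{\infty}$ and the formal Milnor algebras: one must use Nakayama together with Borel's theorem in one direction and faithful flatness of the completion of $\mathbb{R}\{x\}$ in the other, and in particular resist the tempting but false assertion that $\mathbb{R}[[x]]$ is faithfully flat over $\mathcal{E}_n$ — the Taylor map $\mathcal{E}_n\to\mathbb{R}[[x]]$ is not even injective, because of the flat functions. The remaining ingredients — base change of a quotient ring along $\mathbb{R}\to\mathbb{C}$ and the behaviour of the Milnor algebra of an isolated complex singularity under completion — are routine.
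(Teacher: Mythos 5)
The paper does not actually prove this proposition; it is quoted verbatim with a citation to Proposition 7.1.2 of \cite{NunoBallesterosOSR:2021}, so there is no in-paper argument to compare against. Your proof is correct and self-contained, and it follows what is essentially the standard route: (i) pass from the smooth Milnor algebra $\mathcal{E}_n/J(f)$ to the formal one $\mathbb{R}[[x]]/\widehat{J(f)}$, using Nakayama to get $\mathfrak{m}_n^k\subseteq J(f)$ and Borel's theorem to kill the flat functions in one direction, and faithful flatness of the completion $\mathbb{R}\{x\}\hookrightarrow\mathbb{R}[[x]]$ plus Hadamard's lemma (to see that $\mathfrak{m}_n^k$ is generated by the degree-$k$ monomials) in the other; (ii) base-change along $\mathbb{R}\to\mathbb{C}$, which is unproblematic because $\mathbb{C}$ is finite over $\mathbb{R}$, so $\mathbb{C}\otimes_{\mathbb{R}}\mathbb{R}[[x]]=\mathbb{C}[[z]]$ and dimensions are preserved; (iii) compare $\mathcal{O}_n/J(f_{\mathbb{C}})$ with its completion. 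Each step is sound, and you correctly flag the one real trap, namely that one cannot argue via flatness of $\mathbb{R}[[x]]$ over $\mathcal{E}_n$ since the Taylor map has a large kernel. The only cosmetic quibble is the notation $\widehat{J(f)}=J(f)\,\mathbb{R}[[x]]$, which conflates the $C^\infty$ ideal with its analytic counterpart in $\mathbb{R}\{x\}$; since the generators are analytic the intended meaning is clear, but it would be cleaner to name the analytic Jacobian ideal separately before extending it to $\mathbb{R}[[x]]$.
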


\section{Asymptotic Lipschitz Invariants of Real Mappings} \label{section:asymptotic}

In this section, we investigate the behavior of real mappings by introducing the concept of asymptotic Lipschitz invariants. We define these invariants and examine their fundamental properties, with a focus on their behavior under asymptotic bi-Lipschitz transformations. These results extend the framework of \(\mathcal{G}\)-bi-Lipschitz equivalence.

\begin{definition}
Let \( F: U \subset \mathbb{R}^n \to \mathbb{R}^p \) be a map, where \( U \) is an open subset. We define the \textbf{order} of \( F \) at a point \( x \in U \) to be a real number \( \alpha \geq 0 \) if the limit
\[
L = \lim_{t \to 0^+} \frac{F(x + tv)}{t^{\alpha}} 
\]
exists for every nonzero vector \( v \in \mathbb{R}^n \setminus \{0\} \), and for at least one such \( v \), the limit satisfies \( L \in \mathbb{R}^p \setminus \{0\} \). In this case, we write \( \operatorname{ord}_x F = \alpha \).

If, for every \( m \in \mathbb{N} \) and some \( v \in \mathbb{R}^n \), we have
\[
\lim_{t \to 0^+} \frac{F(x + tv)}{t^m} = 0,
\]
then we define \( \operatorname{ord}_x F = +\infty \).

Moreover, if \( F(x) \neq 0 \), we define \( \operatorname{ord}_x F = 0 \).
\end{definition}

\begin{definition}
A map $F: U \subset \mathbb{R}^n \to \mathbb{R}^p$ is said to have an $\alpha$\textbf{-directional derivative at $x \in U$ in the direction} $v \in \mathbb{R}^n$ if the following limit exists:
\[
\frac{\partial^{\alpha} F}{\partial_{+} v^{\alpha}}(x) = 
\lim_{t \to 0^+} \frac{F(x + tv) - F(x)}{t^{\alpha}}.
\]

Furthermore, if there exists a continuous map $H: \mathbb{R}^n \to \mathbb{R}^p$ such that
\[
F(x + tv) - F(x) = t^{\alpha} H(v) + o_{\alpha}(tv), \forall v \in \mathbb{R}^n
\]
where $\lim_{t \to 0^+} \frac{o_{\alpha}(tv)}{t^{\alpha}} = 0$, we denote $H(v) = D^{\alpha}_{+} F_x(v)$ and refer to it as the $\alpha$-\textbf{derivative} of $F$ at $x$.
\end{definition}

\begin{example} \label{Ex:HalfOrd}
   Let $f: \mathbb{R} \rightarrow \mathbb{R}$ be a function defined by
\[
f(x) = \left\{
\begin{array}{ll} 
\frac{x}{|x|^{1/2}}, & \text{if } x \neq 0,   \\ 
0, & \text{if } x = 0.
\end{array}
\right.
\]

Then, ${\rm ord}_0 f= \frac{1}{2}$. Moreover,
\[
D^{\alpha}_{+} f_x(v) = \left\{
\begin{array}{ll} 
0, & \text{if } \alpha < \frac{1}{2},   \\
\frac{v}{|v|^{1/2}}, & \text{if } \alpha = \frac{1}{2},   \\ 
\text{does not exist}, & \text{if } \alpha > \frac{1}{2}.
\end{array}
\right.
\]
\end{example}

Notice that the existence of the order $\alpha$ of a map does not necessarily imply the existence of its $\alpha$-derivative. Consider the following example.

\begin{example} \label{Ex:OrdZero}
    Let \( f \colon \mathbb{R}^2 \to \mathbb{R} \) be defined by  
    \[
    f(x, y) = 
    \begin{cases} 
    \frac{xy}{x^2 + y^2}, & \text{if } (x, y) \neq (0, 0), \\ 
    0, & \text{if } (x, y) = (0, 0).
    \end{cases}
    \]

    Observe that $\alpha = {\rm ord}_{(0,0)}f = 0$ and the candidate for \( D_{+}^{\alpha}f_{(0,0)}(v) \) should be \( f(v) \), since \( f(tv) = f(v) \). However, \( f \) is not a continuous function.
\end{example}

The following proposition establishes properties of the \(\alpha\)-derivative.

\begin{proposition}\label{Prop:Properties}
    Let \( F: U \subset \mathbb{R}^n \to  \mathbb{R}^p \) be a map that admits the \(\alpha\)-derivative at \( x \). The following properties hold:
    \begin{enumerate}
        \item \label{itemone} The map \( D^{\alpha}_{+} F_x \) is homogeneous of degree \(\alpha\).
        \item \label{itemtwo} If \( \alpha > 0 \), then \( F \) is continuous at \( x \).
        \item \label{itemthree} If a \(C^{k}\) function \( f: U \subset \mathbb{R}^n \to  \mathbb{R} \) is of order \(\alpha \leq k\)  at \(x_0\), then \(\mathrm{in}_{x_0}(f)(x) = D^{\alpha}_{+}f_{x_0}(x-x_0)\)
        \item \label{itemfour} If for a given lipeomorphism \( \phi \colon U \to U \), the derivatives \( D^{\alpha}_{+}(F \circ \phi)_x \) and \( D^{\alpha}_{+}F_{\phi(x)} \) exist, then we have
\[
D^{\alpha}_{+} (F \circ \phi)_x = D^{\alpha}_{+} F_{\phi(x)} \circ d_x \phi.
\]

    \end{enumerate}    
\end{proposition}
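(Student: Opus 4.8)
The four items are essentially independent, so I would treat them one at a time, and I expect a single recurring technical point — the \emph{uniformity in the direction} of the little‑$o$ remainder in the definition of the $\alpha$-derivative — to be the crux of \eqref{itemtwo} and \eqref{itemfour}. For \eqref{itemone} I would argue straight from the defining limit: for $v\in\mathbb{R}^n$ and $\lambda>0$, substituting $s=\lambda t$ gives
\[
D^{\alpha}_{+}F_x(\lambda v)=\lim_{t\to 0^+}\frac{F(x+t\lambda v)-F(x)}{t^{\alpha}}
=\lim_{s\to 0^+}\frac{F(x+sv)-F(x)}{(s/\lambda)^{\alpha}}=\lambda^{\alpha}\,D^{\alpha}_{+}F_x(v),
\]
while $D^{\alpha}_{+}F_x(0)=0$ is immediate from $F(x+t\cdot 0)-F(x)\equiv 0$; in particular $H:=D^{\alpha}_{+}F_x$ is determined by its restriction to the unit sphere $S^{n-1}$. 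For \eqref{itemtwo}, write a small $w\neq 0$ as $w=tv$ with $t=\|w\|$, $v\in S^{n-1}$, and use the decomposition $F(x+w)-F(x)=t^{\alpha}H(v)+o_{\alpha}(tv)$. Since $H$ is continuous and $S^{n-1}$ is compact, $|H(v)|\le M$ there, whence $\|t^{\alpha}H(v)\|\le M\|w\|^{\alpha}\to 0$ because $\alpha>0$; the remainder $o_{\alpha}(tv)$ tends to $0$ by the (direction-uniform, over the compact $S^{n-1}$) little-$o$ estimate, and therefore $F(x+w)\to F(x)$. I would make this uniformity explicit, since the purely pointwise reading of ``$o_{\alpha}(tv)/t^{\alpha}\to 0$'' does not by itself close the argument.

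For \eqref{itemthree} I may assume $x_0=0$ and $f(0)=0$. Taylor's theorem for a $C^k$ function yields $f(x)=\sum_{j=1}^{k}P_j(x)+R(x)$ with $P_j$ homogeneous of degree $j$ and $R(x)=o(\|x\|^k)$, and by definition ${\rm in}_{0}(f)=P_d$ where $d=\min\{\,j:P_j\not\equiv 0\,\}$. If no such $d\le k$ existed, then $f(x)=R(x)=o(\|x\|^k)$ would force $f(tv)/t^{\beta}\to 0$ for every $\beta\le k$ and every $v$, contradicting $\operatorname{ord}_0 f=\alpha\le k$; hence $d\le k$. Then
\[
\frac{f(tv)}{t^{d}}=P_d(v)+\sum_{j>d}t^{j-d}P_j(v)+\frac{R(tv)}{t^{d}}\;\longrightarrow\;P_d(v),
\]
since $R(tv)/t^{d}=(R(tv)/t^{k})\,t^{k-d}\to 0$ as $k-d\ge 0$. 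Because $P_d\not\equiv 0$, this identifies $\operatorname{ord}_0 f=d=\alpha$, exhibits $\sum_{j>d}t^{j-d}P_j(v)+R(tv)$ as an admissible remainder $o_{\alpha}(tv)$, and shows $D^{\alpha}_{+}f_0=P_d={\rm in}_{0}(f)$ since the homogeneous polynomial $P_d$ is continuous; translating back gives ${\rm in}_{x_0}(f)(x)=D^{\alpha}_{+}f_{x_0}(x-x_0)$.

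For \eqref{itemfour}, set $y=\phi(x)$, $H_F:=D^{\alpha}_{+}F_y$ and $H:=D^{\alpha}_{+}(F\circ\phi)_x$, both assumed to exist. Fix $v$ and evaluate the limit defining $H(v)=\lim_{t\to0^+}t^{-\alpha}\bigl(F(\phi(x+tv))-F(y)\bigr)$ along the subsequence $t=1/j_i$ producing the pseudo-Lipschitz derivative $d_x\phi$. Writing $u_i:=\phi_{x,j_i}(v)$ we have $\phi(x+\tfrac1{j_i}v)-y=\tfrac1{j_i}u_i$ and $u_i\to d_x\phi(v)$, so it suffices to prove
$(1/j_i)^{-\alpha}\bigl(F(y+\tfrac1{j_i}u_i)-F(y)\bigr)\to H_F(d_x\phi(v))$, which follows from the $\alpha$-derivative expansion of $F$ at $y$, continuity of $H_F$, and the local uniformity of its remainder over the compact set $\{u_i\}_i\cup\{d_x\phi(v)\}$. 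Since the full limit defining $H(v)$ exists, it coincides with this subsequential value, so $H(v)=H_F(d_x\phi(v))$ for all $v$, that is, $D^{\alpha}_{+}(F\circ\phi)_x=D^{\alpha}_{+}F_{\phi(x)}\circ d_x\phi$.

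The main obstacle, concentrated in \eqref{itemtwo} and \eqref{itemfour}, is precisely the interaction between the \emph{subsequential} nature of $d_x\phi$ and the fact that one must feed $F$ a direction $u_i$ that only converges to $d_x\phi(v)$: this is exactly why a direction-uniform remainder for the $\alpha$-derivative is needed, and I would either build that uniformity into the definition of the $\alpha$-derivative or isolate it as a short preliminary lemma (a compactness argument on $S^{n-1}$) before proving the proposition. The remaining computations in \eqref{itemone} and \eqref{itemthree} are routine once the bookkeeping above is set up.
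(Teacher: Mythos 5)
Your proposal follows essentially the same route as the paper: items (1)--(3) are dispatched directly from the definitions (the paper gives no details for these; your Taylor-expansion argument for (3) is the natural one), and for (4) you, like the paper, evaluate the defining limit along the subsequence $t_j$ producing the pseudo-Lipschitz derivative, write $\phi(x+t_jv)-\phi(x)=t_jw_j$ with $w_j\to w=d_x\phi(v)$, and reduce everything to showing $t_j^{-\alpha}\bigl(F(y+t_jw_j)-F(y+t_jw)\bigr)\to 0$ via continuity of $D^{\alpha}_{+}F_y$ together with control of the remainder. The uniformity issue you isolate is genuine and is precisely the point the paper compresses into the claim that $o_{\alpha,j}(t_jw_j)/t_j^{\alpha}\to 0$ ``by a diagonal argument'': since the $t_j$ are prescribed rather than chosen, no diagonal selection is available, and the pointwise hypothesis $\lim_{t\to 0^+}o_{\alpha}(tv)/t^{\alpha}=0$ for each fixed $v$ must be upgraded to local uniformity in the direction along the convergent family $\{w_j\}\cup\{w\}$ (and over $S^{n-1}$ for item (2)). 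One caveat on your proposed repair: compactness of $S^{n-1}$ by itself does not upgrade pointwise to uniform convergence, so the uniformity genuinely has to be built into the definition of the $\alpha$-derivative or added as a hypothesis; with that strengthening both your argument and the paper's close, and without it both carry the same residual gap in (2) and (4).
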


Before presenting the proof of this statement, let us recall the definition
of the initial part of a function.

\begin{definition}
Let \( x_0 \in U \) with \( f(x_0) = 0 \). If there exists \( \alpha \geq 0 \) such that \( f(x) \) can be written as \( f(x) = f_\alpha(x) + o_\alpha(x) \), where 
\[
\lim_{x \to x_0} \frac{f_\alpha(x)}{\|x - x_0\|^\alpha} \neq 0
\quad \text{and} \quad
\lim_{x \to x_0} \frac{o_\alpha(x)}{\|x - x_0\|^\alpha} = 0,
\]
then \( f_\alpha \) is called the \textbf{initial part} of \( f \) at \( x_0 \), and we denote it by \linebreak \( \mathrm{in}_{x_0}(f) := f_\alpha \). If \( f(x_0) \neq 0 \), we define \( \mathrm{in}_{x_0}(f) \equiv f(x_0) \).
\end{definition}

\begin{proof}

The properties (1), (2) and (3) follow directly from the definition. To establish (4), let \( x \in U \), and choose a vector \( v \in \mathbb{R}^n \setminus \{0\} \) which ensures that the order \( \operatorname{ord}_x F \) is well-defined. Set \( y = \phi(x) \) and \( w = d_x \phi(v) \).

Since the \(\alpha\)-derivatives \( D^{\alpha}_{+}(F \circ \phi)_x \) and \( D^{\alpha}_{+}F_{\phi(x)} \) exist, it suffices to show that their difference vanishes along the sequence \( (t_j)_{j \in \mathbb{N}} \), where \( t_j = \frac{1}{j} \). The definition of the pseudo-Lipschitz derivative implies that
    \[
    \phi(x + t_j v) - \phi(x) = t_j d_x \phi(v) + o(t_j),
    \]
    where \( \lim_{j \to +\infty} \frac{o(t_j)}{t_j} = 0 \). Note that
    \[
    w_j = \frac{\phi(x + t_j v) - \phi(x)}{t_j} \longrightarrow w \quad \text{and} \quad \frac{F(\phi(x + t_j v)) - F(y + t_j w)}{t_j^{\alpha}} \longrightarrow 0.
    \]
    
    Indeed, by the continuity of \( D_{+}^{\alpha} F_y \), we have that
    \[
    \frac{F(y + t_j w_j) - F(y + t_j w)}{t_j^{\alpha}} = \frac{t_j^{\alpha} (D^{\alpha}_{+} F_y(w_j) - D^{\alpha}_{+} F_y(w)) + o_{\alpha}(t_j w) - {o}_{\alpha,j}(t_j w_j)}{t_j^{\alpha}}
    \]
    goes to zero as \( j \to +\infty \), where $\dfrac{{o}_{\alpha,j}(t_j w_j)}{t_j^{\alpha}} \longrightarrow 0$ by a diagonal argument.

\end{proof}

\begin{theorem}\label{inv_order} 
Let \( F \colon U \to \mathbb{R}^p \) and \( G \colon V \to \mathbb{R}^p \), where \( U, V \subset \mathbb{R}^n \) are open, be mappings such that the derivatives \( D_{+}^\alpha F_x \) and \( D_{+}^\beta G_{\phi(x)} \) exist, where \( \phi \colon U \to V \) is a lipeomorphism. If \( F \sim_{x} G \circ \phi \), then
\(
\operatorname{ord}_x F = \operatorname{ord}_{\phi(x)} G.
\)
\end{theorem}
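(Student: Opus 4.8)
The plan is to show that both orders are well-defined finite (or both infinite) and equal, by exploiting the asymptotic equivalence together with the homogeneity of the $\alpha$- and $\beta$-derivatives. First I would fix a nonzero vector $v$ realizing the order of $F$ at $x$, so that $L := D^{\alpha}_{+}F_x(v) = \lim_{t\to 0^+} F(x+tv)/t^{\alpha} \neq 0$ (after subtracting $F(x)=0$, which holds since germs are based at $0$; more precisely one works with $F(x+tv)-F(x)$ throughout, and $\mathrm{ord}_x F$ concerns this increment). Using the Lipschitz estimate for $\phi$ and the pseudo-Lipschitz derivative, write $\phi(x+tv)-\phi(x) = t\, d_x\phi(v) + o(t)$, and let $w = d_x\phi(v)$, which is nonzero because $d_x\phi$ is a lipeomorphism hence injective. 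The asymptotic equivalence $F \sim_x G\circ\phi$ gives constants $c_1,c_2>0$ with $c_2\|F(x+tv)-F(x)\| \le \|G(\phi(x+tv))-G(\phi(x))\| \le c_1\|F(x+tv)-F(x)\|$ for small $t$ (here I use the increment form, noting $G(\phi(x))$ need not be $0$ a priori, but $G\circ\phi$ and $F$ both vanish at $x$ as germs, so one may subtract freely).

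Next I would transfer the limit through $\phi$. By the argument already carried out in the proof of Proposition~\ref{Prop:Properties}(4), $\big(G(\phi(x+tv)) - G(x') - t^{\beta}D^{\beta}_{+}G_{x'}(w)\big)/t^{\beta} \to 0$ where $x' = \phi(x)$; that is, $G(\phi(x+tv)) - G(\phi(x)) \sim t^{\beta} D^{\beta}_{+}G_{\phi(x)}(w)$ as $t\to 0^+$. Combining this with $F(x+tv)-F(x) \sim t^{\alpha} L$ and the two-sided bound from $\sim_x$, one gets, for small $t>0$,
\[
c_2 \|L\|\, t^{\alpha}(1+o(1)) \le \|D^{\beta}_{+}G_{\phi(x)}(w)\|\, t^{\beta}(1+o(1)) \le c_1\|L\|\, t^{\alpha}(1+o(1)).
\]
If $\alpha < \beta$, dividing by $t^{\alpha}$ and letting $t\to 0^+$ forces $c_2\|L\| \le 0$, a contradiction since $L\neq 0$. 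If $\alpha > \beta$, dividing by $t^{\beta}$ forces $\|D^{\beta}_{+}G_{\phi(x)}(w)\| = 0$; but then I must rule out that $w$ is a "degenerate" direction for $G$ — here I would use that $d_y\psi = (d_x\phi)^{-1}$ and apply the same reasoning to the inverse equivalence $G\circ\phi \sim_x F$ (equivalently $G \sim_{\phi(x)} F\circ\psi$), picking a direction realizing $\mathrm{ord}_{\phi(x)}G$ and pushing it back, which yields the reverse inequality $\mathrm{ord}_{\phi(x)}G \le \mathrm{ord}_x F$. The two inequalities give equality.

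The main obstacle I anticipate is the bookkeeping around \emph{which} direction realizes the order on each side: $\mathrm{ord}_x F = \alpha$ only guarantees $D^{\alpha}_{+}F_x(v)\neq 0$ for \emph{some} $v$, and that particular $v$ need not map under $d_x\phi$ to a direction realizing $\mathrm{ord}_{\phi(x)}G$ — so the naive one-sided computation only gives $\beta \le \alpha$ (from the side where I control nonvanishing), not equality. The clean fix is the symmetric argument via the inverse lipeomorphism $\psi$ and its pseudo-Lipschitz derivative, using the fact recalled in the preliminaries that $(d_x\phi)^{-1} = d_y\psi$ as lipeomorphisms; this ensures that a direction realizing the order downstairs pulls back to a nonzero direction upstairs, delivering $\alpha \le \beta$ as well. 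A secondary, more technical point is justifying that the $o(1)$ and $o_{\alpha}$ error terms remain uniform along the chosen sequence $t_j = 1/j$ when composing with $\phi$; this is precisely the diagonal argument already invoked in Proposition~\ref{Prop:Properties}(4), so I would simply cite that computation rather than repeat it. Finally, the infinite-order cases ($\alpha = +\infty$ or $\beta = +\infty$) follow by the same two-sided comparison: if one side has finite order the bound above would force the other to match it, so infinitude is preserved on both sides simultaneously.
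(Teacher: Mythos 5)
Your proof is correct and follows essentially the same route as the paper's: transfer the limit through \(\phi\) via the computation in Proposition~\ref{Prop:Properties}~(4), compare with the two-sided bounds coming from \(\sim_x\), and exclude \(\alpha<\beta\) and \(\alpha>\beta\) separately, using the inverse lipeomorphism and \((d_x\phi)^{-1}=d_y\psi\) for the second case. You are in fact slightly more careful than the published proof on the point that a direction \(v\) realizing \(\operatorname{ord}_x F\) need not push forward to a direction realizing \(\operatorname{ord}_{\phi(x)} G\) (so the one-sided computation only gives one inequality); the paper glosses over this, while your symmetric argument handles it explicitly.
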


\begin{proof}
 Using the notation established in the Proposition \ref{Prop:Properties}, assume for contradiction, that \( \beta = \operatorname{ord}_{y} G < \operatorname{ord}_x F \). 

By hypothesis, there exists \( c > 0 \) such that
   \begin{eqnarray*}
        \frac{1}{t^{\beta}_j} \|G(y + t_jw)\| & \leq &  \frac{1}{t^{\beta}_j} \|G(y + t_jw) - G(\phi(x + t_jv))\| + \frac{1}{t^{\beta}_j}\|G(\phi(x + t_jv)) \| \\
        &\leq& \frac{1}{t^{\beta}_j} \|G(y + t_jw) - G(\phi(x + t_jv))\| + \frac{c}{t^{\beta}_j}\|F(x + t_jv)\|.
    \end{eqnarray*}

    Notice that the right side of the inequality tends to zero when \( j \to +\infty \), as shown by the proof of Proposition \ref{Prop:Properties}. By the other hand the left side cannot tend to zero since \( \beta = \operatorname{ord}_{y} G \). This is a contradiction! Similarly, using \( (d_x \phi)^{-1} \), we can show that \( \alpha = \operatorname{ord}_x F < \operatorname{ord}_{y} G \) is also impossible. Thus, we conclude that \( \alpha = \beta \).
\end{proof}





By imposing the additional requirement of asymptotically Lipschitz equivalent, we obtain the ``asymptotic chain rule", as described below. The proof is similar to the one given in Proposition \ref{Prop:Properties}.

\begin{theorem}
   Let \( F \colon U \to \mathbb{R}^p \) and \( G \colon V \to \mathbb{R}^p \) be mappings that are asymptotically Lipschitz equivalent at \(x\), by a lipemorphism \(\phi\). If the \(\alpha\)-derivatives of \(F\) at \(x\) and of \(G\) at \(\phi(x)\) exist, then
   \[
   D^{\alpha}_{+}F_{x} \sim D^{\alpha}_{+}G_{\phi(x)} \circ d_{x}\phi.
   \]
\end{theorem}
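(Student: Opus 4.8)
The plan is to combine Theorem \ref{inv_order} with the asymptotic chain rule of Proposition \ref{Prop:Properties}(\ref{itemfour}). Since $F$ and $G$ are asymptotically Lipschitz equivalent at $x$ via $\phi$, there is a constant $c\ge 1$ with
\[
\tfrac{1}{c}\|F(x+tv)-F(x)\| \le \|G(\phi(x+tv))-G(\phi(x))\| \le c\,\|F(x+tv)-F(x)\|
\]
for all small $t$ and all $v$. In particular $F \sim_x G\circ\phi$ (after subtracting the base-point values, which are zero in the germ normalization), so Theorem \ref{inv_order} applies and gives $\operatorname{ord}_x F = \operatorname{ord}_{\phi(x)} G =: \alpha$; thus the exponent on both sides is genuinely the same $\alpha$ and the statement is not vacuous.

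Next I would write, for the sequence $t_j = 1/j$, $y = \phi(x)$, $w = d_x\phi(v)$, and $w_j = (\phi(x+t_jv)-\phi(x))/t_j \to w$, the telescoping estimate
\[
\|G(\phi(x+t_jv))-G(y)\| \le \|G(\phi(x+t_jv))-G(y+t_jw_j)\| + \|G(y+t_jw_j)-G(y)\|,
\]
and the reverse triangle inequality for the lower bound. The first term on the right, after dividing by $t_j^{\alpha}$, tends to $0$: this is exactly the computation carried out in the proof of Proposition \ref{Prop:Properties}, using the continuity of $D^{\alpha}_+G_y$ together with $w_j \to w$ and a diagonal argument on the remainder terms $o_{\alpha,j}$. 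Hence
\[
\frac{\|G(\phi(x+t_jv))-G(y)\|}{t_j^{\alpha}} \longrightarrow \|D^{\alpha}_+G_y(w)\| = \|D^{\alpha}_+G_{\phi(x)}(d_x\phi(v))\|.
\]
On the other hand $\|F(x+t_jv)-F(x)\|/t_j^{\alpha} \to \|D^{\alpha}_+F_x(v)\|$ by definition of the $\alpha$-derivative. Feeding these two limits into the asymptotic inequality displayed above yields
\[
\tfrac{1}{c}\,\|D^{\alpha}_+F_x(v)\| \le \|D^{\alpha}_+G_{\phi(x)}(d_x\phi(v))\| \le c\,\|D^{\alpha}_+F_x(v)\|
\]
for every $v\in\mathbb{R}^n$, which is precisely the relation $D^{\alpha}_+F_x \sim D^{\alpha}_+G_{\phi(x)}\circ d_x\phi$.

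One technical point deserves care: the pseudo-Lipschitz derivative $d_x\phi$ is only defined along a subsequence $\{j_i\}$, so strictly speaking the argument should be run along that subsequence (and the same subsequence used when invoking the convergence $\phi_{x,j_i}\to d_x\phi$); since the $\alpha$-derivatives of $F$ and $G$ are assumed to exist as honest limits, the values along the subsequence still equal the full limits, so no information is lost. The main obstacle is therefore not conceptual but bookkeeping: one must ensure that all the little-$o$ remainders — the one coming from $\phi(x+t_jv)-\phi(x) = t_j d_x\phi(v) + o(t_j)$ and the ones $o_{\alpha,j}$ coming from evaluating $G$ at the moving points $y+t_jw_j$ — are controlled simultaneously, which is handled by the same diagonal argument already used in Proposition \ref{Prop:Properties}; this is why the paper remarks that the proof is similar.
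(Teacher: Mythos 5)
Your argument is correct and is essentially the elaboration the paper intends: the paper gives no separate proof of this theorem, remarking only that it is ``similar to the one given in Proposition \ref{Prop:Properties}'', and you carry out exactly that computation (divide the asymptotic-Lipschitz inequality by $t_j^{\alpha}$, identify the limits of the two outer terms from the definition of $D^{\alpha}_{+}F_x$, and identify the limit of the middle term via the moving-direction/diagonal argument of item (\ref{itemfour}), noting also, correctly, that everything must be run along the subsequence defining $d_x\phi$). The only cosmetic slip is that your telescoping term $\|G(\phi(x+t_jv))-G(y+t_jw_j)\|$ vanishes identically, since $\phi(x+t_jv)=y+t_jw_j$ by the very definition of $w_j$; all the real work is in showing $\|G(y+t_jw_j)-G(y)\|/t_j^{\alpha}\to\|D^{\alpha}_{+}G_y(w)\|$, which you handle correctly.
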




If a map \(F\) is differentiable at a point \(x \in \mathbb{R}^n\), then we can consider the standard derivative \(D_xF\).  Under this assumption, we deduce some invariants from the preceding discussion.

\begin{corollary}
Let \( F \colon U \to \mathbb{R}^p \) and \( G \colon V \to \mathbb{R}^p \) be differentiable mappings, where \( U, V \subset \mathbb{R}^n \) are open subsets. Suppose there exists a lipeomorphism \( \phi \colon U \to V \) such that \( F \approx_{x} G \). Then,
\begin{enumerate}
 \item \label{coritemone} \(d_x \phi(\ker(D_xF)) = \ker(D_{\phi(x)}G)\).
 \item \label{coritemtwo} \(\operatorname{rank}(D_xF) = \operatorname{rank}(D_{\phi(x)}G).
\)
 \item \label{coritemthree} \(\phi(\operatorname{Sing}(F)) = \operatorname{Sing}(G).\)
 \end{enumerate}
\end{corollary}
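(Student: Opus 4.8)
The plan is to reduce each claim to the already-established "asymptotic chain rule" together with the fact that pseudo-Lipschitz derivatives of lipeomorphisms are themselves lipeomorphisms. First I would observe that for a map $F$ that is differentiable at $x$, the standard derivative $D_xF$ coincides with the $1$-derivative $D^{1}_{+}F_x$: indeed $F(x+tv)-F(x) = t\,D_xF(v) + o(t\|v\|)$ is exactly the defining property of the $\alpha$-derivative with $\alpha = 1$ and $H(v) = D_xF(v)$, which is continuous (in fact linear). Likewise $D_{\phi(x)}G = D^{1}_{+}G_{\phi(x)}$. Since $F \approx_x G$ implies in particular $F \sim_x G\circ\phi$ (asymptotic Lipschitz equivalence is stronger than asymptotic equivalence after composing with $\phi$), Theorem~\ref{inv_order} applies and gives $\operatorname{ord}_x F = \operatorname{ord}_{\phi(x)} G$; this is what guarantees the orders match so that the asymptotic chain rule can be invoked with a single exponent $\alpha = 1$ on both sides (unless the common order is $0$ or $+\infty$, the degenerate cases, which I would handle separately — if $F(x)\neq 0$ then $G(\phi(x))\neq 0$ and everything is trivial, and the infinite-order case cannot occur for a map with nonzero derivative).

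Next I would apply the asymptotic chain rule (the theorem stated just before Proposition~\ref{Prop:Properties}'s corollary, i.e. the one asserting $D^{\alpha}_{+}F_x \sim D^{\alpha}_{+}G_{\phi(x)}\circ d_x\phi$) with $\alpha = 1$. This yields the existence of constants $c_1, c_2 > 0$ with
\[
c_2\,\|D_xF(v)\| \le \|D_{\phi(x)}G(d_x\phi(v))\| \le c_1\,\|D_xF(v)\|
\]
for all $v$ in a neighborhood of $0$, hence by homogeneity for all $v \in \mathbb{R}^n$. From the right-hand inequality, $D_xF(v) = 0 \Rightarrow D_{\phi(x)}G(d_x\phi(v)) = 0$, so $d_x\phi(\ker D_xF) \subseteq \ker D_{\phi(x)}G$; applying the same argument to $F^{-1}$-side data, that is to $G \sim_{\phi(x)} F\circ\phi^{-1}$ using the pseudo-Lipschitz derivative $d_{\phi(x)}\phi^{-1} = (d_x\phi)^{-1}$ (recalled in the preliminaries), gives the reverse inclusion. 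Since $d_x\phi$ is a bijection (it is a lipeomorphism), this proves claim~\eqref{coritemone}. Claim~\eqref{coritemtwo} is then immediate from the rank–nullity theorem: $\operatorname{rank}(D_xF) = n - \dim\ker(D_xF) = n - \dim d_x\phi(\ker D_xF) = n - \dim\ker(D_{\phi(x)}G) = \operatorname{rank}(D_{\phi(x)}G)$, using that $d_x\phi$ preserves dimension.

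Finally, claim~\eqref{coritemthree} follows by applying~\eqref{coritemtwo} (or~\eqref{coritemone}) pointwise: a point $p \in U$ is a singular point of $F$ exactly when $\operatorname{rank}(D_pF) < p$ is not maximal, equivalently $D_pF$ is not surjective onto $\mathbb{R}^p$; since $\operatorname{rank}(D_pF) = \operatorname{rank}(D_{\phi(p)}G)$ for every $p$ in a neighborhood of $x$, we get $p \in \operatorname{Sing}(F) \iff \phi(p) \in \operatorname{Sing}(G)$, i.e. $\phi(\operatorname{Sing}(F)) = \operatorname{Sing}(G)$. The main obstacle I anticipate is purely bookkeeping: making sure the hypotheses of Theorem~\ref{inv_order} and of the asymptotic chain rule are genuinely met at every point (not just at $x$) — in particular that $F \approx_x G$ in the germ sense propagates to $F \approx_p G$ for nearby $p$ so that~\eqref{coritemthree} holds on a whole neighborhood — and that the $1$-derivative $D^{1}_{+}$ really does reduce to the classical derivative with no hidden continuity gap, which is the only place where differentiability (rather than mere existence of directional derivatives) is used.
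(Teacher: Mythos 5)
Your proposal is correct and follows essentially the same route as the paper: the paper's proof likewise rests on the two-sided inequality $c_2\|D_xF(v)\|\le\|D_{\phi(x)}G(d_x\phi(v))\|\le c_1\|D_xF(v)\|$ (i.e.\ the asymptotic chain rule with $\alpha=1$, where $D^1_+F_x=D_xF$), deduces both inclusions for the kernels, and then obtains the rank and singular-set statements exactly as you do. Your extra detour through Theorem~\ref{inv_order} to match orders is unnecessary --- the chain rule only needs existence of the $1$-derivatives, which differentiability supplies --- but it is harmless.
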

\begin{proof}
    Consider \( w \in d_x \phi(\ker(D_x F)) \). Then, there exists \( v \in \ker(D_x F) \) with \( w = d_x \phi(v) \), and a constant \(c_1 > 0\) such that
    \[
    ||D_{\phi(x)} G(w)|| = ||D_{\phi(x)} G(d_x \phi(v))|| \leq c_1||D_x F(v)|| = 0.
    \]

    Conversely, let \( w \in \ker(D_{\phi(x)} G) \subset \mathbb{R}^n \). Since \( d_x \phi \colon \mathbb{R}^n \rightarrow \mathbb{R}^n \) is a homeomorphism, there exists \( v \in \mathbb{R}^n \) such that \( w = d_x \phi(v) \). Thus,
    \[
    0 = ||D_{\phi(x)} G(w)|| = ||D_{\phi(x)} G(d_x \phi(v))|| \geq c_2 ||D_x F(v)||,
    \]
for some constant \(c_2 > 0\).
    Therefore, \( d_x \phi(\ker(D_x F)) = \ker(D_{\phi(x)} G) \). In particular, this implies \( \dim \ker(D_x F) = \dim \ker(D_{\phi(x)} G) \), and consequently, \( \operatorname{rank}(D_x F) = \operatorname{rank}(D_{\phi(x)} G) \).

    Furthermore, note that \( x \in \mathbb{R}^n \) is a singular point of \( F \) if and only if \( p > \operatorname{rank}(D_x F) = \operatorname{rank}(D_{\phi(x)} G) \). We conclude that \( x \in \operatorname{Sing}(F) \) if and only if \( \phi(x) \in \operatorname{Sing}(G) \).
\end{proof}


\section{Invariance of the Milnor Number under Lipschitz equivalences}\label{section:invarianceMilnor}


For simplicity, we will use the notation \({\rm ord}(f) = {\rm ord}_0(f)\) and \({\rm in}(f) = {\rm in}_0(f)\). In what follows, we examine the invariance of the Milnor number \(\mu(f)\) under the bi-Lipschitz equivalences previously discussed.

\begin{lemma}\label{Lem:homotopy}
   Let \(g\colon (\C^n,0)\to (\C,0)\) be a germ of analytic function such that \({\rm in}(g) = g_m\) has an isolated singularity at the origin. For each \( t \in [0,1] \), set  \(h_t(x) = t^{-m} g(tx)\). Then, there exists a homotopy between  \(\dfrac{\nabla g_m}{|\nabla g_m |}\) and \(\dfrac{\nabla h_{t_0}}{|\nabla h_{t_0} |}\), for some \(0 < t_0 \leq 1\).
\end{lemma}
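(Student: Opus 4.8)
The plan is to build the homotopy directly from the family $h_t(x) = t^{-m} g(tx)$ and show that, for $t_0$ small enough, the normalized gradient maps $\nabla h_{t_0}/|\nabla h_{t_0}|$ and $\nabla g_m/|\nabla g_m|$ are homotopic as maps $S^{2n-1}_\varepsilon \to S^{2n-1}$ on a suitably small sphere. The underlying principle is standard: if $g = g_m + (\text{higher order})$ with $g_m$ having an isolated singularity, then the Taylor expansion of $h_t$ is $h_t(x) = g_m(x) + t\, g_{m+1}(x) + t^2 g_{m+2}(x) + \cdots$, so $h_t \to g_m$ in the $C^1$-topology on a fixed compact set as $t \to 0^+$, and $h_0 = g_m$. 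The family $\{h_t\}_{t \in [0,1]}$ thus gives a continuous path in the space of function-germs connecting $g = h_1$ to $g_m = h_0$.

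First I would fix a representative of $g$ on a ball $B_\rho$ and write $g(x) = g_m(x) + r(x)$ with $r(x) = o(\|x\|^m)$ and $\nabla r(x) = o(\|x\|^{m-1})$; then $\nabla h_t(x) = \nabla g_m(x) + t^{-(m-1)}\nabla r(tx)$, and the error term is $o(\|x\|^{m-1})$ uniformly for $t \in (0,1]$ on a compact sphere — this is the key quantitative estimate. Next, since $g_m$ has an isolated singularity at $0$, the set $\{\nabla g_m = 0\}$ meets a sphere $S_\varepsilon = \{\|x\| = \varepsilon\}$ only at $0$, so $|\nabla g_m|$ is bounded below by some $\delta > 0$ on $S_\varepsilon$. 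Choosing $t_0$ small enough that the error term $|t_0^{-(m-1)}\nabla r(t_0 x)| < \delta$ on $S_\varepsilon$ ensures $\nabla h_t(x) \neq 0$ on $S_\varepsilon$ for all $t \in [0, t_0]$ (where $h_0 := g_m$), and moreover that $\nabla g_m(x)$ and $\nabla h_t(x)$ never point in opposite directions on $S_\varepsilon$. The straight-line homotopy $H(x,s) = (1-s)\nabla g_m(x) + s\,\nabla h_{t_0}(x)$, renormalized, then gives the desired homotopy between $\nabla g_m/|\nabla g_m|$ and $\nabla h_{t_0}/|\nabla h_{t_0}|$ on $S_\varepsilon$; one should also reconcile the degree on $S_\varepsilon$ with the degree on the unit sphere via the obvious rescaling $x \mapsto \varepsilon x$, which changes neither homotopy class.

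I would organize the proof in three steps: (i) establish the $C^1$-convergence $\nabla h_t \to \nabla g_m$ as $t \to 0^+$, uniformly on $S_\varepsilon$, via the $o(\|x\|^{m-1})$ estimate on $\nabla r$; (ii) use the isolated-singularity hypothesis to get a uniform lower bound $|\nabla g_m| \geq \delta$ on $S_\varepsilon$ and pick $t_0$ so that $|\nabla h_{t_0} - \nabla g_m| < \delta$ there; (iii) deduce that the straight-line homotopy stays in $\R^n \setminus \{0\}$ (equivalently $\C^n \setminus \{0\}$) and normalize. The main obstacle — really the only subtle point — is step (i): one must be careful that the remainder in the Taylor/homogeneous decomposition of an analytic germ genuinely controls its gradient, i.e., that $g(x) - g_m(x) = o(\|x\|^m)$ upgrades to $\nabla(g - g_m)(x) = o(\|x\|^{m-1})$. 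For analytic (or even $C^{m+1}$) germs this follows from expanding $g$ into homogeneous parts $g_m + g_{m+1} + \cdots$ and differentiating termwise, since $\nabla g_{m+k}$ is homogeneous of degree $m+k-1$; the uniformity in $t$ is then automatic because the substitution $x \mapsto tx$ followed by the rescaling $t^{-(m-1)}$ exactly cancels on each homogeneous piece except to raise the order of the tail.
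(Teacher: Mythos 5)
Your proof is correct and follows essentially the same route as the paper's: both expand \(g\) into homogeneous parts, use the isolated singularity of \(g_m\) to bound \(|\nabla g_m|\) from below on a small sphere \(\mathbb{S}^{2n-1}_{\varepsilon}\), and exploit the uniform convergence \(\nabla h_t \to \nabla g_m\) as \(t \to 0^+\). The only cosmetic difference is that the paper takes the homotopy to be the normalized family \(t \mapsto \nabla h_t/|\nabla h_t|\) itself on \([0,t_0]\), establishing nonvanishing by a compactness--contradiction argument, whereas you interpolate linearly between \(\nabla g_m\) and \(\nabla h_{t_0}\) with an explicit quantitative bound; both work.
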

\begin{proof}
Consider the map
\[
H: \mathbb{S}^{2n-1}_{\varepsilon} \times [0,1] \longrightarrow \mathbb{S}^{2n-1}_{\varepsilon},
\quad (x,t) \longmapsto \frac{\nabla h_t(x)}{|\nabla h_t(x)|},
\]
where \( 0 < \varepsilon \ll 1 \) is a Milnor radius for \( g_m \).

If we express \(g\) as  
\[
g = g_m + \sum_{i\geq1} g_{m+i},
\]
where each \(g_{m+i}\) is homogeneous of degree \(m+i\), then \[
\nabla h_t(x) = \nabla g_m(x) + \sum_{i\geq1} t^i \nabla g_{m+i}(x).
\] 
 
We claim that there exists \(0< t_0 \leq 1\) such that \(\nabla h_t(x) \neq 0\) for all \(x \in \mathbb{S}^{2n-1}_{\varepsilon}\) and \(t \leq t_0\). Suppose otherwise. Then, there exist sequences \(t_j \to 0\) and \(x_j \to x_0 \in \mathbb{S}^{2n-1}_{\varepsilon}\) such that \(\nabla h_{t_j}(x_j) = 0\) for each \(j \in \mathbb{N}\). This implies \(\nabla g_m(x_0) = 0\), which contradicts the fact that \(g_m\) has an isolated singularity at the origin.  

Consequently, \( H \) provides the desired homotopy.  
\end{proof}

\begin{proposition}
\label{milnor_ord}
    Let \(f\colon (\R^n,0)\to (\R,0)\) be a germ of \(C^{\infty}\) function. If \({\rm in}(f)\) has an algebraically isolated singularity at the origin, then the Milnor number of \(f\) satisfies \(\mu(f) = ({\rm ord}(f) - 1)^n\).
\end{proposition}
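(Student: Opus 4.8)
The plan is to reduce the statement to the homogeneous case handled by Lemma~\ref{Lem:homotopy} together with Milnor's degree formula, and then transport the conclusion back to $f$. First I would observe that if ${\rm in}(f) = f_m$ has an algebraically isolated singularity at the origin, then ${\rm ord}(f) = m$: indeed, writing $f = f_m + o_m$ with $f_m$ the initial part means $f$ vanishes to order exactly $m$ along almost every ray, so the order of $f$ in the sense of Section~\ref{section:asymptotic} coincides with the degree of the initial polynomial. Hence the identity to be proved becomes $\mu(f) = (m-1)^n$. Next I would pass to the complexification: by Proposition~\ref{Prop:Complexifing}, $\mu(f) = \mu(f_\mathbb{C})$ whenever either side is finite, and the initial part complexifies too, so it suffices to prove $\mu(g) = (m-1)^n$ for $g \colon (\mathbb{C}^n,0) \to (\mathbb{C},0)$ analytic with ${\rm in}(g) = g_m$ having an isolated singularity at the origin.

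The core of the argument is then the standard fact that the Milnor number is upper semicontinuous and, in fact, constant under the deformation $h_t(x) = t^{-m} g(tx)$. Concretely, I would use Lemma~\ref{Lem:homotopy}: for some $0 < t_0 \leq 1$ there is a homotopy between the Gauss maps $\nabla g_m / |\nabla g_m|$ and $\nabla h_{t_0}/|\nabla h_{t_0}|$ on a small sphere $\mathbb{S}^{2n-1}_\varepsilon$. By Milnor's theorem (cited in the Introduction), for a holomorphic germ with isolated (equivalently, algebraically isolated) singularity the Milnor number equals the topological degree of the map $\nabla(\cdot)/|\nabla(\cdot)|$ on a small sphere; since homotopic maps between spheres have the same degree, we get $\mu(g_m) = \mu(h_{t_0})$. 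Finally, $h_{t_0}$ is a unit times $g$ precomposed with the linear isomorphism $x \mapsto t_0 x$, so $\mu(h_{t_0}) = \mu(g)$ because the Milnor number is invariant under linear changes of coordinates and multiplication by a nonzero constant (both preserve the Jacobian ideal). Therefore $\mu(g) = \mu(g_m)$, and it remains to compute $\mu(g_m)$ for a homogeneous polynomial of degree $m$ with isolated singularity.

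For the homogeneous computation I would invoke the classical formula: if $g_m$ is homogeneous of degree $m$ in $n$ variables with an isolated singularity at the origin, then its partial derivatives are homogeneous of degree $m-1$ forming a regular sequence, and $\mu(g_m) = \dim_\mathbb{C} \mathbb{C}\{x_1,\dots,x_n\}/(\partial g_m/\partial x_1, \dots, \partial g_m/\partial x_n) = (m-1)^n$ by B\'ezout's theorem (or by the multiplicativity of the Hilbert--Samuel multiplicity of a complete intersection cut out by $n$ forms of degree $m-1$). Combining the three equalities $\mu(f) = \mu(f_\mathbb{C}) = \mu(g) = \mu(g_m) = (m-1)^n = ({\rm ord}(f)-1)^n$ finishes the proof.

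The main obstacle I anticipate is the finiteness bookkeeping: one must first be sure $\mu(f) < \infty$ in order to apply Proposition~\ref{Prop:Complexifing} and the degree interpretation of the Milnor number, and this is exactly where the hypothesis that ${\rm in}(f)$ — not $f$ itself — has algebraically isolated singularity is used, via the deformation argument showing $\mu(f) = \mu({\rm in}(f)_\mathbb{C}) < \infty$. A secondary technical point is justifying that the order ${\rm ord}(f)$ in the asymptotic sense of Section~\ref{section:asymptotic} really equals $\deg {\rm in}(f)$; this follows from the definition of the initial part but should be stated cleanly, perhaps using Proposition~\ref{Prop:Properties}(\ref{itemthree}) in the $C^\infty$ case or a direct ray-by-ray estimate in general.
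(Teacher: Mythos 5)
Your overall strategy---reduce to the homogeneous initial part via the deformation $h_t(x)=t^{-m}g(tx)$ and Lemma~\ref{Lem:homotopy}, identify Milnor numbers with degrees of the normalized gradient on a small sphere, and then compute $(m-1)^n$ for a homogeneous germ with isolated singularity---is exactly the paper's, and those steps are sound. The gap is at your very first reduction: you write ``by Proposition~\ref{Prop:Complexifing}, $\mu(f)=\mu(f_{\mathbb C})$'' and then declare that it suffices to treat an analytic germ $g$. But $f$ is only assumed to be a $C^{\infty}$ germ, so $f_{\mathbb C}$ need not exist: Proposition~\ref{Prop:Complexifing} applies to analytic germs only, and the expansion $g=g_m+\sum_{i\ge 1}g_{m+i}$ on which Lemma~\ref{Lem:homotopy} rests is, for a general $C^{\infty}$ germ, merely a formal Taylor series. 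Your proposal never bridges the passage from $C^{\infty}$ to analytic, and without that bridge the sentence ``it suffices to prove $\mu(g)=(m-1)^n$ for $g$ analytic'' is unjustified.

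The paper closes this gap with a finite-determinacy argument that is absent from your outline. Since $\mu(f_m)=(m-1)^n<\infty$ by Milnor--Orlik, the initial part $f_m$ is $k$-$\mathcal{R}$-$C^{\infty}$ determined for $k=(m-1)^n+1$ by \cite{Ruas:1986}; one then replaces $f$ by its $k$-jet $g=j^kf$, a polynomial with the same initial part, runs your argument (complexification, homotopy, degree, linear change of coordinates) on $g$, and finally uses that $g$ is itself $k$-determined to conclude that $f$ and $g$ are $\mathcal{R}$-$C^{\infty}$ equivalent, hence $\mu(f)=\mu(g)$. This determinacy step is the essential missing idea; it is also what makes your ``finiteness bookkeeping'' non-circular, since finiteness of $\mu(f)$ is deduced from $\mu(g)=\mu(g_m)<\infty$ rather than established by the very deformation whose validity you are trying to justify. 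The remainder of your outline, including the B\'ezout computation of $\mu(g_m)=(m-1)^n$ (where the paper instead cites Milnor--Orlik), goes through once this reduction is in place.
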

\begin{proof}
Let \( f_m = {\rm in}(f) \), where \( f_m \) is a homogeneous function of degree \( m = {\rm ord}(f) \). By considering the complexification of \( f_m \), Theorem 1 in \cite{MilnorO:1970} ensures that \(\mu(f_m) = ({\rm ord}(f)-1)^n\).

Thus, \( f_m \) is \( k \)-\(\mathcal{R}\)-\(C^{\infty}\) determined, for \( k = ({\rm ord}(f)-1)^n + 1\) (see \cite{Ruas:1986}).

Now, consider the \( k \)-th jet of \( f \), denoted by \( g = j^k f \). Since \( k \geq {\rm ord}(f) \), we have that \( {\rm in}(g) = {\rm in}(f) \), and we denote this initial part as \( g_m \).

Complexifying \( g_m \), Milnor \cite{Milnor:1968} guarantees that
\[
\mu(g_m) = \deg\left(\frac{\nabla g_m}{|\nabla g_m|}\right).
\]

By Lemma \ref{Lem:homotopy}, we have \( \mu(g_m) = \mu(h_{t_0}) \) for some \( t_0 > 0 \). Since \( \varphi(x) = t_0 x \) is a diffeomorphism, it follows that
\[
\mu(g) = \mu(g \circ \varphi) = \mu(h_{t_0}) = \mu(g_m) = \mu(f_m).
\]

Finally, \( g \) is also \( k \)-\(\mathcal{R}\)-\(C^{\infty}\) determined, therefore \( f \) and \( g \) are \(\mathcal{R}\)-\(C^{\infty}\) equivalent. We conclude that,
\[
\mu(f) = \mu(g) = ({\rm ord}(f)-1)^n.
\]
\end{proof}

Note that if \(f\) and \(g\) are \(\mathcal{R}\)-bi-Lipschitz equivalent, then they are, in particular, \(\mathcal{R}\)-asymptotically Lipschitz equivalent at the origin. By combining Theorem \ref{inv_order} with Proposition \ref{milnor_ord}, we deduce the following. 

\begin{theorem} \label{Cor:Invariance}
    Let \(f, g\colon (\R^n,0)\to (\R,0)\) be germs of \(C^{\infty}\) functions. Suppose that \({\rm in}(f)\) and \({\rm in}(g)\) have algebraically isolated singularities at the origin. If \(f\) and \(g\) are \(\mathcal{R}\)-asymptotically Lipschitz equivalent  at the origin, then \(\mu(f) = \mu(g)\).
\end{theorem}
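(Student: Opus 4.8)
The plan is to combine the two main tools already assembled: the formula $\mu(f)=({\rm ord}(f)-1)^n$ from Proposition~\ref{milnor_ord}, valid whenever ${\rm in}(f)$ has an algebraically isolated singularity, and the invariance of the order under $\mathcal{R}$-asymptotic Lipschitz equivalence from Theorem~\ref{inv_order}. Since the hypotheses guarantee that both ${\rm in}(f)$ and ${\rm in}(g)$ have algebraically isolated singularities, Proposition~\ref{milnor_ord} applies to both $f$ and $g$, giving $\mu(f)=({\rm ord}(f)-1)^n$ and $\mu(g)=({\rm ord}(g)-1)^n$. Thus it suffices to prove ${\rm ord}(f)={\rm ord}(g)$, and then the conclusion $\mu(f)=\mu(g)$ is immediate.

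To get equality of orders, I would first observe that $\mathcal{R}$-asymptotic Lipschitz equivalence at the origin means there is a lipeomorphism $\phi$ (with $\phi(0)=0$) such that $f\sim_0 g\circ\phi$. To invoke Theorem~\ref{inv_order} I need the relevant $\alpha$-derivatives to exist: here is where the algebraically-isolated-singularity hypothesis is used again. Indeed, since ${\rm in}(f)$ exists and is homogeneous of degree $m:={\rm ord}(f)$, Proposition~\ref{Prop:Properties}\ref{itemthree} (together with the fact that $f$ is $C^\infty$ and $m\le k$ for the relevant truncation, or more directly the definition of the initial part) shows that $f$ admits the $m$-derivative at $0$, namely $D^m_+ f_0(x)={\rm in}(f)(x)$; likewise $g$ admits its ${\rm ord}(g)$-derivative at $0=\phi(0)$. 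With these in hand, Theorem~\ref{inv_order} applied to $F=f$, $G=g$, $x=0$ yields ${\rm ord}_0 f={\rm ord}_0 g$, i.e. ${\rm ord}(f)={\rm ord}(g)$.

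Assembling the pieces: $\mu(f)=({\rm ord}(f)-1)^n=({\rm ord}(g)-1)^n=\mu(g)$.

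The main obstacle is the verification that the $\alpha$-derivatives required by Theorem~\ref{inv_order} genuinely exist in this setting. Theorem~\ref{inv_order} is stated with the hypothesis that $D^\alpha_+ F_x$ and $D^\beta_+ G_{\phi(x)}$ exist, not merely that the orders are defined; Example~\ref{Ex:OrdZero} shows these are different conditions. One must argue that when ${\rm in}(f)$ has an \emph{algebraically isolated} singularity (so in particular ${\rm in}(f)$ is a genuine homogeneous polynomial of the right degree and $f$ is $C^\infty$), the remainder $f-{\rm in}(f)$ is genuinely $o_\alpha$ uniformly in the direction, producing the continuous map $H={\rm in}(f)$ required in the definition of the $\alpha$-derivative; the same for $g$. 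This is essentially the content of Proposition~\ref{Prop:Properties}\ref{itemthree} for the $C^\infty$ case, and should be invoked explicitly. Once that is granted, everything else is a formal chaining of the cited results.
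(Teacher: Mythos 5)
Your proposal is correct and follows exactly the paper's route: the paper derives the theorem by combining Proposition~\ref{milnor_ord} (which gives $\mu(f)=({\rm ord}(f)-1)^n$ under the algebraically-isolated-singularity hypothesis on the initial part) with Theorem~\ref{inv_order} (invariance of the order). Your additional care in verifying, via Proposition~\ref{Prop:Properties}\ref{itemthree}, that the $\alpha$-derivatives required by Theorem~\ref{inv_order} actually exist is a welcome refinement of a step the paper leaves implicit, but it does not change the argument.
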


We illustrate with the following example that, in general, \( \phi \colon \mathbb{R}^n \to \mathbb{R}^n \) cannot be considered a bi-\(\alpha\)-Hölder map for any \( \alpha \in (0,1) \), \textit{i.e.}, a bijective map satisfying, for some constant \( c > 0 \), 
\[
\frac{1}{c} \|x - y\|^{\alpha} \leq \|\phi(x) - \phi(y)\| \leq c \|x - y\|^{\alpha}, \quad \text{for all } x, y \in \mathbb{R}^n.
\]

\begin{example}\label{Ex:AlphaHolder}
Let \( f, g \colon \mathbb{R}^2 \to \mathbb{R} \) be the functions defined by  
\[
f(x,y) = x^{2k+1} + y^{2k+1}, \quad g(x,y) = x^{2k+3} + y^{2k+3},
\]  
for a fixed \(k \geq 1\).

Consider the map \( \phi \colon \mathbb{R}^2 \to \mathbb{R}^2 \) given by \(\phi(x,y) = \left(x^{\frac{2k+1}{2k+3}}, y^{\frac{2k+1}{2k+3}} \right).\) This is a bi-\(\alpha\)-Hölder map, \(\alpha = \frac{2k+1}{2k+3}\) (Example 1.1.3 in \cite{Fiorenza:2016}), satisfying \( f = g \circ \phi \). However, the Milnor numbers are \( \mu(f) = 4k^2 \) and \( \mu(g) = (2k+2)^2 \).     
\end{example}

In a more general setting, families of functions that vary in a controlled manner under 
\(\mathcal{R}\)-bi-Lipschitz equivalence should also exhibit invariance of the Milnor number. 

\begin{corollary}
Let $\{f_t\}_{t\in [0,1]}$ be a family of $C^{\infty }$ functions. Suppose that $\{f_t\}$ is $\mathcal{R}$-bi-Lipschitz trivial and $\mu({\rm in}(f_t))<+\infty$ for all $t\in [0,1]$. Then $\mu(f_t)=\mu(f_0)$ for all $t\in [0,1]$.
\end{corollary}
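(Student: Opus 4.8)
The plan is to reduce the corollary to a combination of Theorem~\ref{Cor:Invariance} and Proposition~\ref{milnor_ord}. First, recall that $\mathcal{R}$-bi-Lipschitz triviality of the family $\{f_t\}$ means precisely that for each $t\in[0,1]$ there is a germ of lipeomorphism $\phi_t\colon(\R^n,0)\to(\R^n,0)$ with $f_t=f_0\circ\phi_t$; in particular $f_t$ and $f_0$ are $\mathcal{R}$-bi-Lipschitz equivalent, hence $\mathcal{R}$-asymptotically Lipschitz equivalent at the origin. Second, the hypothesis $\mu({\rm in}(f_t))<+\infty$ for all $t$ says exactly that each ${\rm in}(f_t)$ has an algebraically isolated singularity at the origin, which is the standing assumption of Theorem~\ref{Cor:Invariance}. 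Applying that theorem to the pair $f_t,f_0$ yields $\mu(f_t)=\mu(f_0)$ for every $t\in[0,1]$, which is the assertion.

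Concretely I would write: fix $t\in[0,1]$. Since $\{f_t\}$ is $\mathcal{R}$-bi-Lipschitz trivial there is a germ of lipeomorphism $\phi_t$ with $f_t=f_0\circ\phi_t$, so in particular $f_t\sim_0 f_0\circ\phi_t$ (the asymptotic equivalence being trivially realized with $c_1=c_2=1$), i.e.\ $f_t$ and $f_0$ are $\mathcal{R}$-asymptotically Lipschitz equivalent at $0$. By hypothesis $\mu({\rm in}(f_t))<+\infty$ and $\mu({\rm in}(f_0))<+\infty$, so ${\rm in}(f_t)$ and ${\rm in}(f_0)$ have algebraically isolated singularities at the origin. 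Theorem~\ref{Cor:Invariance} then gives $\mu(f_t)=\mu(f_0)$. Since $t$ was arbitrary, $\mu(f_t)=\mu(f_0)$ for all $t\in[0,1]$.

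The only real subtlety — and the one place I would be careful — is making sure the notion of "$\mathcal{R}$-bi-Lipschitz trivial family" is interpreted so that it genuinely produces, for each $t$, an $\mathcal{R}$-bi-Lipschitz equivalence between $f_t$ and $f_0$ (as opposed to, say, only a trivialization of the total space that does not restrict to individual germ equivalences). Under the standard meaning this is immediate, and no estimates are needed beyond what Theorem~\ref{Cor:Invariance} already supplies; the corollary is essentially a packaging statement. Alternatively, one can bypass Theorem~\ref{Cor:Invariance} and argue directly via Proposition~\ref{milnor_ord}: bi-Lipschitz equivalence preserves the order (Theorem~\ref{inv_order} applied with the trivial asymptotic equivalence), so ${\rm ord}(f_t)={\rm ord}(f_0)$, and then Proposition~\ref{milnor_ord} gives $\mu(f_t)=({\rm ord}(f_t)-1)^n=({\rm ord}(f_0)-1)^n=\mu(f_0)$. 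I would present the short route through Theorem~\ref{Cor:Invariance} as the main proof and perhaps remark on the direct computation.
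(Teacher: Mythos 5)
Your proof is correct and takes essentially the same route as the paper, which presents this corollary as an immediate consequence of Theorem~\ref{Cor:Invariance} (each $f_t$ being $\mathcal{R}$-bi-Lipschitz, hence $\mathcal{R}$-asymptotically Lipschitz, equivalent to $f_0$, with the finiteness hypothesis supplying the required algebraically isolated singularities of the initial parts). No further comment is needed.
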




\begin{example}[\cite{NguyenRT:2020}]\label{exam:Lip_trivial_milnor_num_different}
For each \( t \in [0,1] \), consider the function \( f_t \colon \mathbb{R}^2 \to \mathbb{R} \) given by  
\[
f_t(x,y) = x^4 + y^4 + 2t(x^2y^2 + y^6).
\]  

The family \(\{f_t\}\) is \(\mathcal{R}\)-bi-Lipschitz trivial, and for all \( t \in [0,1) \), the initial part \({\rm in}(f_t)\) has a finite Milnor number. However, note that at \( t = 1 \), the initial part  
\(
{\rm in}(f_1) = (x^2 + y^2)^2
\)  
does not have an algebraically isolated singularity at the origin. Moreover, the Milnor numbers satisfy \(\mu(f_0) = 9\) and \(\mu(f_1) = 13\). 
\end{example}

Note that the Milnor number is not preserved under $\mathcal{R}$-bi-Lipschitz equivalence, even within a family $\{g_t\}$ of homogeneous polynomials that is $\mathcal{R}$-bi-Lipschitz trivial.

\begin{example}
Let \( g_t = \operatorname{in}(f_t) \), where the family \( \{f_t\} \) is defined in Example~\ref{exam:Lip_trivial_milnor_num_different}. Since \( \{f_t\} \) is \( \mathcal{R} \)-bi-Lipschitz trivial, it follows that the family \( \{g_t\} \) is also \( \mathcal{R} \)-bi-Lipschitz trivial. Nevertheless, the Milnor number is not constant: we have \( \mu(g_1) = +\infty \), while \( \mu(g_t) = 9 \) for all \( t \neq 1 \).
\end{example}

\section{Invariance of the Milnor Number under $C^k$ equivalences}\label{section:diffeo}

We recall that two \( C^{\infty} \) function-germs \( f, g \colon (\mathbb{R}^n,0) \to (\mathbb{R},0) \) are said to be \(\mathcal{R}\)-\(C^k\) equivalent, for \( k \in \mathbb{N} \cup \{\infty\} \), if there exists a \( C^k \) diffeomorphism \( \phi \colon (\mathbb{R}^n,0) \to (\mathbb{R}^n,0) \) such that \(f = g \circ \phi.
\) In particular, if \( k \geq 1 \) and \( f, g \) are \(\mathcal{R}\)-\(C^k\) equivalent, then they are \(\mathcal{R}\)-bi-Lipschitz equivalent. 

Even though \(\mathcal{R}\)-\(C^k\) equivalence for \( k \geq 1 \) is a stronger condition, we can show that it does not, \textit{a priori}, require both initial parts to have an algebraically isolated singularity at the origin. Indeed, if \(\phi\colon (\mathbb{R}^n,0) \to (\mathbb{R}^n,0)\) is a \(C^k\) diffeomorphism with \( k \geq 1 \) such that \( f = g \circ \phi \), then its derivative \( D_0\phi \colon \mathbb{R}^n \to \mathbb{R}^n \) is a linear isomorphism satisfying  
\[
{\rm in}(f) = {\rm in}(g) \circ D_0\phi.
\] 

In other words, \({\rm in}(f)\) and \({\rm in}(g)\) are \(\mathcal{R}\)-\(C^{\infty}\) equivalent. As a consequence, if we assume, for instance, that \(\mu({\rm in}(f)) < +\infty\), it follows necessarily that \(\mu({\rm in}(g)) < +\infty\). With this in mind, we proceed to state the following.

\begin{theorem}\label{Teo:CkEqui}
    Let \( f, g \colon (\mathbb{R}^n,0) \to (\mathbb{R},0) \) be germs of \( C^{\infty} \) functions. Suppose that  \({\rm in}(f)\) has an algebraically isolated singularity at the origin. If \( f \) and \( g \) are \(\mathcal{R}\)-\(C^1\) equivalent, then \(\mu(f) = \mu(g)\).
\end{theorem}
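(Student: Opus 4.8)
The plan is to reduce the statement to Theorem \ref{Cor:Invariance}. The key observation, which is already spelled out in the discussion preceding the statement, is that a $C^1$ diffeomorphism $\phi\colon(\mathbb{R}^n,0)\to(\mathbb{R}^n,0)$ with $f=g\circ\phi$ has a derivative $D_0\phi$ at the origin which is a linear isomorphism, and comparing lowest-order terms gives $\mathrm{in}(f)=\mathrm{in}(g)\circ D_0\phi$. Since a linear isomorphism is in particular an $\mathcal{R}$-$C^\infty$ equivalence (hence preserves the Milnor number, and also the property of having an algebraically isolated singularity at the origin), the hypothesis $\mu(\mathrm{in}(f))<+\infty$ forces $\mu(\mathrm{in}(g))<+\infty$. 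Thus both $\mathrm{in}(f)$ and $\mathrm{in}(g)$ have algebraically isolated singularities at the origin, which is exactly the missing hypothesis of Theorem \ref{Cor:Invariance} that we are not assuming directly.

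Next I would note that a $C^1$ diffeomorphism is, in particular, a germ of lipeomorphism: on a small enough neighborhood of the origin, $\|D_x\phi\|$ and $\|D_x\phi^{-1}\|$ are bounded (by continuity of the derivative and of its inverse), so the mean value inequality yields the two-sided Lipschitz bound for $\phi$ and $\phi^{-1}$. Therefore $f=g\circ\phi$ exhibits $f$ and $g$ as $\mathcal{R}$-bi-Lipschitz equivalent, and a fortiori as $\mathcal{R}$-asymptotically Lipschitz equivalent at the origin (indeed $f\sim_0 g\circ\phi$ trivially, since $f=g\circ\phi$).

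Finally, I would invoke Theorem \ref{Cor:Invariance} with the two function-germs $f$ and $g$: both have initial parts with algebraically isolated singularity at the origin (the first by hypothesis, the second by the argument above), and they are $\mathcal{R}$-asymptotically Lipschitz equivalent at the origin, so $\mu(f)=\mu(g)$.

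The only genuinely nontrivial point is the identity $\mathrm{in}(f)=\mathrm{in}(g)\circ D_0\phi$, i.e. that the linear part of $\phi$ transports initial parts. I expect this to be the main obstacle to write carefully: one must check that if $g=g_m+(\text{higher order})$ with $g_m$ homogeneous of degree $m=\mathrm{ord}(g)$, then $g(\phi(x))=g_m(D_0\phi(x))+o(\|x\|^m)$, using $\phi(x)=D_0\phi(x)+o(\|x\|)$ together with the homogeneity and polynomial-type estimates on $g_m$ and on the Taylor remainder of $g$; and that $g_m(D_0\phi(\cdot))$ is genuinely of order $m$, which holds because $D_0\phi$ is invertible so it cannot annihilate the nonzero homogeneous polynomial $g_m$. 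Note this also gives $\mathrm{ord}(f)=\mathrm{ord}(g)$ directly, consistent with Theorem \ref{inv_order}. Everything else is a routine assembly of results already established in the paper.
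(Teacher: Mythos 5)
Your proposal is correct and follows essentially the same route as the paper, which proves Theorem \ref{Teo:CkEqui} via the discussion immediately preceding it: the identity \({\rm in}(f) = {\rm in}(g) \circ D_0\phi\) transfers the algebraically isolated singularity from \({\rm in}(f)\) to \({\rm in}(g)\), and then Theorem \ref{Cor:Invariance} applies since a \(C^1\) diffeomorphism germ is a lipeomorphism germ. Your filling-in of the Taylor/homogeneity details behind \({\rm in}(f) = {\rm in}(g) \circ D_0\phi\) is exactly the verification the paper leaves implicit.
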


This result naturally extends to trivial \(\mathcal{R}\)-\(C^k\) families of functions, as shown in the next corollary.

\begin{corollary}
Let \(\{f_t\}_{t\in [0,1]}\) be a family of \(C^{\infty}\) functions. Suppose that \(\{f_t\}\) is \(\mathcal{R}\)-\(C^1\) trivial and that \({\rm in}(f_0)\) has an algebraically isolated singularity at the origin. Then, \(\mu(f_t) = \mu(f_0)\), for all \(t \in [0,1]\).
\end{corollary}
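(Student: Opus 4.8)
The plan is to reduce the statement to a pointwise application of Theorem~\ref{Teo:CkEqui}. First I would unpack the hypothesis: saying that the family $\{f_t\}_{t\in[0,1]}$ is $\mathcal{R}$-$C^1$ trivial means there is a family of $C^1$ diffeomorphism-germs $\phi_t\colon(\mathbb{R}^n,0)\to(\mathbb{R}^n,0)$ with $\phi_0={\rm id}$ and $f_t=f_0\circ\phi_t$ for every $t\in[0,1]$. In particular, fixing an arbitrary $t\in[0,1]$, the germs $f_0$ and $f_t$ are $\mathcal{R}$-$C^1$ equivalent via $\phi_t$.

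Next I would invoke the hypothesis that ${\rm in}(f_0)$ has an algebraically isolated singularity at the origin, together with Theorem~\ref{Teo:CkEqui} applied to the pair $(f,g)=(f_0,f_t)$: since ${\rm in}(f_0)$ has an algebraically isolated singularity and $f_0$ and $f_t$ are $\mathcal{R}$-$C^1$ equivalent, the theorem yields $\mu(f_t)=\mu(f_0)$. As $t$ was arbitrary, this holds for all $t\in[0,1]$, which is exactly the claim. (If one wishes, one may also note along the way that, as observed in the discussion preceding Theorem~\ref{Teo:CkEqui}, $D_0\phi_t$ is a linear isomorphism with ${\rm in}(f_t)={\rm in}(f_0)\circ D_0\phi_t$, so each ${\rm in}(f_t)$ automatically has an algebraically isolated singularity as well; this is not needed for the conclusion but records that the triviality propagates the hypothesis.)

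I do not expect a genuine obstacle here: the content is entirely carried by Theorem~\ref{Teo:CkEqui}, and the only thing to be careful about is stating precisely what $\mathcal{R}$-$C^1$ triviality of a family means so that the pointwise equivalence $f_0\sim_{\mathcal{R}\text{-}C^1}f_t$ is immediate. The mild subtlety worth a sentence is that Theorem~\ref{Teo:CkEqui} is asymmetric in its hypothesis (only ${\rm in}(f)$ is assumed to have an algebraically isolated singularity), so one must take $f_0$ — the member whose initial part is assumed good — in the role of $f$, and $f_t$ in the role of $g$; the triviality assumption is exactly what guarantees $f_0$ is equivalent to every other member of the family, so this asymmetry causes no loss.
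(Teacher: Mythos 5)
Your proposal is correct and is exactly the argument the paper intends: the corollary is stated as an immediate pointwise application of Theorem~\ref{Teo:CkEqui} to the pairs $(f_0,f_t)$, which is precisely what you do. Your remarks on unpacking the definition of $\mathcal{R}$-$C^1$ triviality and on placing $f_0$ in the role of the germ whose initial part is assumed to have an algebraically isolated singularity are the right points of care, and no further argument is needed.
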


\begin{example}
For each \( t \in [0,1] \), consider the function \( f_t \colon \mathbb{R}^2 \to \mathbb{R} \) given by  
\[
f_t(x,y) = x^4 + 2x^2y^2 + y^4 + t y^{k+5}.
\]  
The family \( \{f_t\} \) is \(\mathcal{R}\)-\(C^{k+1}\) trivial (see \cite{Saia:2008}). It is shown in \cite{Kuiper:1972} that this family is not \(\mathcal{R}\)-\(C^{k+2}\) trivial.  

Note that for \(0 < t \leq 1\), we have \(\mu(f_t) = 2(k+5) + 1\), whereas \(\mu(f_0) = +\infty\). Moreover, the initial part \({\rm in}(f_t) = (x^2 + y^2)^2\) does not have an algebraically isolated singularity at the origin.
\end{example}

As shown in Example \ref{Ex:NotC1}, the real Milnor number is not a \(C^1\) invariant. This means that there exist germs of \(C^\infty\) functions \( f, g \colon (\mathbb{R}^n,0) \to (\mathbb{R},0) \) and a \(C^1\) diffeomorphism \(\phi \colon (\mathbb{R}^n, V(f), 0) \to (\mathbb{R}^n, V(g), 0)\) such that \(\mu(f) \neq \mu(g)\). This example can be extended to the case of \(C^k\) equivalence, as follows.

\begin{example}
Let \( f, g \colon \mathbb{R}^2 \to \mathbb{R} \) be the functions defined by  
\[
f(x,y) = x^{3k+1} - y^3, \quad g(x,y) = y.
\]  
Consider the map \( \phi \colon \mathbb{R}^2 \to \mathbb{R}^2 \) given by \(\phi(x,y) = \left(x, x^{k+\frac{1}{3}} - y \right).\) 
This is a \(C^k\) diffeomorphism satisfying \( \phi(V(f)) = V(g) \). However, the Milnor numbers are \( \mu(f) = 6k \) and \( \mu(g) = 0 \). We emphasize that \( \phi \) is not a \( C^{k+1} \) diffeomorphism.
\end{example}

\begin{theorem} \label{Teo:Cinfinito}
Let \( f, g \colon (\mathbb{R}^n,0) \to (\mathbb{R},0) \) be irreducible germs of real analytic functions. If there exists a \( C^{\infty} \) diffeomorphism  
\(\phi \colon (V(f), 0) \to (V(g), 0)\), then  \( \mu(f) = \mu(g) \).
\end{theorem}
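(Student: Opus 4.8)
The plan is to push the statement into the complex setting, where one can invoke L\^e's topological invariance theorem recalled in the Introduction. By Proposition~\ref{Prop:Complexifing} we have $\mu(f)<\infty\iff\mu(f_{\mathbb{C}})<\infty$ and, when these are finite, $\mu(f)=\mu(f_{\mathbb{C}})$ (and likewise for $g$); so it suffices to show that $f_{\mathbb{C}}$ and $g_{\mathbb{C}}$ have the same, possibly infinite, Milnor number. This is a genuine tool here precisely because $f$ and $g$ are real analytic.

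The first and decisive step is to upgrade $\phi$ to a real-analytic equivalence. Since $f$ and $g$ are irreducible, $V(f)$ and $V(g)$ are irreducible germs of real analytic sets, and the $C^{\infty}$ diffeomorphism may be taken to be induced by an ambient germ of $C^{\infty}$ diffeomorphism $\Phi\colon(\mathbb{R}^n,0)\to(\mathbb{R}^n,0)$ with $\Phi(V(f))=V(g)$. One then replaces $\Phi$ by a germ of real-analytic isomorphism $\Psi\colon(\mathbb{R}^n,0)\to(\mathbb{R}^n,0)$ with $\Psi(V(f))=V(g)$, using the rigidity of analytic germs under $C^{\infty}$ equivalence (an Ephraim-type theorem; in this situation such a $\Psi$ can be produced by an Artin/Tougeron approximation argument, exploiting that $V(f)$ is coherent with dense connected smooth locus). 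I expect this upgrade to be the main obstacle: it is the only point where analyticity and irreducibility, as opposed to a mere $C^{\infty}$ matching of the zero sets, are genuinely used, and it is what makes the equivalence complexifiable.

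Granting $\Psi$, the conclusion follows quickly. The germs $f$ and $g\circ\Psi$ are irreducible and real analytic with the same zero set of codimension one; being reduced, each changes sign across its zero set, so the principal ideals $(f)$ and $(g\circ\Psi)$ are real, and the real analytic Nullstellensatz gives $I(V(f))=(f)$ and $I(V(g\circ\Psi))=(g\circ\Psi)$. Hence $f=u\cdot(g\circ\Psi)$ for a real-analytic unit $u$. Complexifying, $\Psi_{\mathbb{C}}\colon(\mathbb{C}^n,0)\to(\mathbb{C}^n,0)$ is a complex-analytic isomorphism with $f_{\mathbb{C}}=u_{\mathbb{C}}\cdot(g_{\mathbb{C}}\circ\Psi_{\mathbb{C}})$, so $f_{\mathbb{C}}$ and $g_{\mathbb{C}}$ are $\mathcal{K}$-equivalent and $\Psi_{\mathbb{C}}(V(f_{\mathbb{C}}))=V(g_{\mathbb{C}})$. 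Since having an isolated singularity is a $\mathcal{K}$-invariant, $f_{\mathbb{C}}$ has an isolated singularity iff $g_{\mathbb{C}}$ does. If neither does, then $\mu(f)=\mu(g)=+\infty$. If both do, then $\Psi_{\mathbb{C}}$ is in particular an ambient homeomorphism of $(\mathbb{C}^n,0)$ carrying $V(f_{\mathbb{C}})$ onto $V(g_{\mathbb{C}})$, so L\^e's theorem yields $\mu(f_{\mathbb{C}})=\mu(g_{\mathbb{C}})$, and Proposition~\ref{Prop:Complexifing} gives $\mu(f)=\mu(g)$.

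One caveat to address when writing the argument is the degenerate case $\dim_{\mathbb{R}}V(f)<n-1$ (e.g.\ $f=x^2+y^2$), in which $V(f)$ carries too little information to determine $f_{\mathbb{C}}$ and the Nullstellensatz step fails; under the conventions in force this case should be excluded (or ruled out by an additional hypothesis), and I would flag it explicitly.
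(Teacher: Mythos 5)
Your overall strategy---upgrade the smooth equivalence to a real-analytic one, complexify, apply L\^e's theorem, and return to the real setting via Proposition~\ref{Prop:Complexifing}---is the same as the paper's. The genuine gap is in the step you yourself identify as the main obstacle, and it is not closed by what you write. First, you assume the given diffeomorphism \(\phi\colon (V(f),0)\to(V(g),0)\) is induced by an \emph{ambient} germ of \(C^{\infty}\) diffeomorphism of \((\mathbb{R}^n,0)\); the hypothesis only provides a diffeomorphism of the (singular) germs themselves, and extending such a map to an ambient diffeomorphism is not automatic and is nowhere justified. Second, the passage from \(C^{\infty}\) to real-analytic equivalence is disposed of with ``an Ephraim-type theorem \dots\ Artin/Tougeron approximation, exploiting coherence and a dense connected smooth locus,'' which is a pointer toward a proof rather than a proof. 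The paper resolves both points at once and intrinsically: Proposition~1.1 of \cite{Ephraim:1973} states precisely that \(C^{\infty}\)-diffeomorphic irreducible real analytic germs are real-analytically diffeomorphic (no ambient extension is needed on the real side), and the ambient extension is performed only after complexification, via the Lifting Lemma of \cite{Greuel-Lossen-Shustin:2007}, which yields \(\Phi_{\mathbb{C}}\colon(\mathbb{C}^n,0)\to(\mathbb{C}^n,0)\) with \(\Phi_{\mathbb{C}}(V(f_{\mathbb{C}}))=V(g_{\mathbb{C}})\). Replacing your first paragraph by these two citations would make the argument complete.

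The rest of your write-up is sound, and in places more careful than the paper's proof. The real Nullstellensatz / \(\mathcal{K}\)-equivalence detour becomes unnecessary once one has \(\Phi_{\mathbb{C}}(V(f_{\mathbb{C}}))=V(g_{\mathbb{C}})\), since L\^e's theorem \cite{Le:1973} only requires an ambient homeomorphism carrying one complex zero set onto the other; but your explicit handling of the case where neither complexification has an isolated singularity, and your flagging of the degenerate case \(\dim_{\mathbb{R}}V(f)<n-1\) (where \(V(f)\) no longer determines \(f_{\mathbb{C}}\) and the whole complexification step breaks down, e.g.\ \(f=x^2+y^2\) versus \(g=x^2+y^4\)), address points that the published proof passes over silently.
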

\begin{proof}
By Proposition 1.1 in \cite{Ephraim:1973}, there exists a real analytic diffeomorphism \( \Phi \colon V(f) \to V(g) \). By complexifying this analytic diffeomorphism, it can be extended, using the Lifting Lemma (see Remark 1.30.1 in \cite{Greuel-Lossen-Shustin:2007}), to an analytic diffeomorphism \( \Phi_{\mathbb{C}} \colon (\mathbb{C}^n,0) \to (\mathbb{C}^n,0) \) such that \( \Phi_{\mathbb{C}}(V(f_{\mathbb{C}})) = V(g_{\mathbb{C}}) \). In particular, since \( \Phi_{\mathbb{C}} \) is a homeomorphism, Lê's result \cite{Le:1973} guarantees that \( \mu(f_{\mathbb{C}}) = \mu(g_{\mathbb{C}}) \). The statement follows from Proposition \ref{Prop:Complexifing}.
\end{proof}

\noindent{\bf Acknowledgements}. 

The first author was partially supported by the FUNCAP post-doctorate fellowship number FPD-0213-00092.01.00/23

\end{document}